\newtheorem{theorem}{Theorem}[section]
\newtheorem{lemma}[theorem]{Lemma}
\newtheorem{proposition}[theorem]{Proposition}
\theoremstyle{definition}
\newtheorem{definition}[theorem]{Definition}
\numberwithin{equation}{section}
\def\ps@pprintTitle{%
   \let\@oddhead\@empty
   \let\@evenhead\@empty
   \let\@oddfoot\@empty
   \let\@evenfoot\@oddfoot
}
\begin{document}

\begin{frontmatter}

\title{Almost Periodicity and Ergodic Theorems for Nonexpansive Mappings and Semigroups in Hadamard Spaces}


\author[mymainaddress]{Hadi Khatibzadeh\corref{mycorrespondingauthor}}
\cortext[mycorrespondingauthor]{Corresponding author}
\ead{hkhatibzadeh@znu.ac.ir}

\author[mymainaddress]{Hadi Pouladi}
\ead{hadi.pouladi@znu.ac.ir}

\address[mymainaddress]{Department of Mathematics, University of Zanjan, P. O. Box 45195-313, Zanjan, Iran.}

\begin{abstract}
The main purpose of this paper is to prove the mean ergodic theorem for nonexpansive mappings and semigroups in locally compact Hadamard spaces, including finite dimensional Hadamard manifolds. The main tool for proving ergodic convergence is the almost periodicity of orbits of a nonexpansive mapping. Therefore, in the first part of the paper, we study almost periodicity (and as a special case, periodicity) in metric and Hadamard spaces. Then, we prove a mean ergodic theorem for nonexpansive mappings and continuous semigroups of contractions in locally compact Hadamard spaces. Finally, an application to the asymptotic behavior of the first order evolution equation associated to the monotone vector field on Hadamard manifolds is presented.
\end{abstract}

\begin{keyword}
Almost periodic\sep Ergodic theorem\sep Karcher mean\sep Locally compact Hadamard spaces\sep Hadamard manifold.
\MSC[2010] 47H25\sep 40A05\sep 40J05
\end{keyword}

\end{frontmatter}


\section{Introduction}
Von Neumann in \cite{VNeumann1932} proved the first mean ergodic theorem for linear nonexpansive mappings in Hilbert spaces. Almost forty years later
\noindent Baillon \cite{baillon1975} proved the first mean ergodic theorem for nonlinear nonexpansive mappings. Regardless of some details, Baillon's theorem is stated as follows.
\begin{theorem}\label{baillon}
Let $C$ be a nonempty closed convex subset of a Hilbert space $H$, $T:C\longrightarrow C$ be a nonexpansive mapping and $F(T):=\{x\in C: Tx=x\}\neq \emptyset$. Then for each $x\in C$, $\frac{1}{n}\displaystyle\sum_{k=0}^{n-1}T^kx$ converges weakly to a fixed point of $T$ as $n\rightarrow\infty$.
\end{theorem}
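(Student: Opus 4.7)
My plan is to follow the classical three-step template for weak ergodic theorems in Hilbert space: boundedness plus reflexivity to extract a weak cluster point; identification of every cluster point as a fixed point via demiclosedness; and uniqueness via an Opial-type argument.

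First, since $F(T)\neq\emptyset$, fix any $p\in F(T)$; nonexpansiveness gives $\|T^{k+1}x-p\|\le\|T^kx-p\|$, so the orbit $\{T^kx\}_k$ is bounded, and therefore so is the sequence $S_nx:=\frac{1}{n}\sum_{k=0}^{n-1}T^kx$. By reflexivity of $H$ at least one weak cluster point of $\{S_nx\}$ exists, and the monotone limit $r(q):=\lim_k\|T^kx-q\|$ is well defined for every $q\in F(T)$.

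The main obstacle will be to show that every weak cluster point $y$ of $\{S_nx\}$ belongs to $F(T)$. In von Neumann's linear case this is immediate from $TS_nx-S_nx=\tfrac{1}{n}(T^nx-x)\to 0$, but when $T$ is nonlinear, averaging no longer commutes with $T$ and this identity is lost. My substitute is Browder's demiclosedness principle (that $I-T$ is demiclosed at $0$) together with the Hilbert identity for averages,
\[
\Bigl\|\tfrac{1}{n}\sum_{k=0}^{n-1}z_k-q\Bigr\|^{2} \;=\; \tfrac{1}{n}\sum_{k=0}^{n-1}\|z_k-q\|^{2} \;-\; \tfrac{1}{n^{2}}\sum_{0\le j<k\le n-1}\|z_j-z_k\|^{2}.
\]
Applied to $z_k=T^kx$ and $q=p$, the convergence $\|T^kx-p\|\to r(p)$ forces the dispersion term $n^{-2}\sum_{j<k}\|T^jx-T^kx\|^{2}$ to vanish in Cesàro sense, and from this I expect to extract along a subsequence the asymptotic regularity $\|TS_{n_j}x-S_{n_j}x\|\to 0$ required to invoke demiclosedness. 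This extraction is the genuine nonlinear content of Baillon's theorem, and is where I expect the technical work to lie.

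For uniqueness of the weak cluster point I would invoke Opial's lemma in $H$: it suffices that $\lim_n\|S_nx-q\|$ exist for each $q\in F(T)$, a fact I expect to deduce from the convexity bound $\|S_nx-q\|\le \frac{1}{n}\sum_{k=0}^{n-1}\|T^kx-q\|$ together with the Hilbert identity above. Given two weak cluster points $y_1,y_2\in F(T)$, expanding $\|S_nx-y_2\|^{2}-\|S_nx-y_1\|^{2}$ and passing to the limit along the two subsequences $S_{n_j}x\rightharpoonup y_1$ and $S_{m_k}x\rightharpoonup y_2$ yields two equations whose sum forces $\|y_1-y_2\|=0$. Combined with the previous step, this delivers weak convergence of the full sequence $\{S_nx\}$ to a fixed point of $T$.
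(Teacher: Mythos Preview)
The paper does not contain a proof of this statement. Theorem~\ref{baillon} is Baillon's 1975 result, quoted in the Introduction purely as historical background and attributed to \cite{baillon1975}; no argument for it appears anywhere in the text. The paper's own ergodic theorem (Theorem~\ref{maintheo1}) is set in \emph{locally compact} Hadamard spaces and proves \emph{strong} almost convergence of Karcher means via almost periodicity of the orbit and the strong convexity of $d^2$; since an infinite-dimensional Hilbert space is not locally compact, that result does not subsume Baillon's theorem, and its method (which hinges on relative compactness of the orbit) is inapplicable in the Hilbert setting. There is therefore nothing in the paper against which to compare your proposal.

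On the proposal itself: the three-step template and the uniqueness argument in Step~3 (expanding $\|S_nx-y_2\|^2-\|S_nx-y_1\|^2$ and using that this difference converges) are sound, but Step~2 has a genuine gap. First, your claim that the dispersion term $n^{-2}\sum_{j<k}\|T^jx-T^kx\|^2$ vanishes is false: for $T$ an irrational rotation of $\mathbb{R}^2$ about the origin one has $\|T^kx-0\|\equiv\|x\|$, yet $\|S_nx\|\to 0$, so by your own identity the dispersion term tends to $\|x\|^2\neq 0$. Second, and more seriously, you offer no mechanism for passing from any Ces\`aro information on the orbit to $\|TS_{n_j}x-S_{n_j}x\|\to 0$; this is exactly the nonlinear obstacle (averaging does not commute with $T$), and neither the Hilbert identity nor the convergence of $\|T^kx-p\|$ supplies it. Showing that weak cluster points of $\{S_nx\}$ lie in $F(T)$ requires a genuinely additional ingredient---Bruck's convex-combination inequality for nonexpansive maps, the direct inner-product estimates of Baillon/Pazy/Br\'ezis--Browder, or an asymptotic-center argument---none of which your sketch invokes.
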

The convergence stated in Theorem \ref{baillon} for orbits of the nonexpansive mapping $T$ is called the Cesaro (or the ergodic) convergence. A stronger notion of convergence which introduced by Lorentz \cite{lorentz1948} is called {\it almost convergence}. A sequence $\{x_n\}$ is called almost convergent if $S_n^k=\frac{1}{n}\displaystyle\sum_{i=0}^{n-1}x_{k+i},$ converges as $n\to\infty$ uniformly respect to $k$.
\noindent Br\'{e}zis and Browder \cite{Brezis1976} extended Baillon's nonlinear ergodic theorem to convergence of general summability methods including almost convergence of the orbit of a nonexpansive mapping in Hilbert spaces. Reich \cite{Reich1978} improved these results and simplified their proof and in \cite{Reich1979} generalized these results to Banach spaces. Bruck \cite{Bruck1979} provided other proof for almost convergence of the orbit of a nonexpansive mapping in Banach spaces. After them, many authors and researchers studied various versions of nonlinear ergodic theorems for nonexpansive mappings and semigroups and their generalizations in linear spaces setting specially in Banach spaces.

Another concept that plays an essential role in this paper is {\it almost periodicity}. This concept was first studied by Bohr \cite{bohr1947almost} and it has a close relationship with the almost convergence. In fact, it is proved that every almost periodic sequence is almost convergent (see \cite{bohr1947almost, lorentz1948}). In this paper we first study periodicity and almost periodicity in metric and Hadamard (introduced in the next section) spaces . Then using the concept of almost periodicity, we prove an ergodic convergence theorem in locally compact Hadamard spaces.

The paper is organized as follows. Section 2 is devoted to the introduction of Hadamard spaces and some preliminaries that we need in the sequel. In Section 3, we show that periodicity and almost periodicity of a sequence is equivalent to periodicity and almost periodicity of the sequence made with the distance of the sequence points with an arbitrary point. Then, we recall a result about relation between almost periodicity and relatively compactness of orbits of a nonexpansive mapping in metric spaces. Section 3 provides the necessary tools for proving the almost convergence of orbits of a nonexpansive mapping. Finally, in Section 4, we present the main result of the paper, i.e., ergodic theorem for nonexpansive mappings in locally compact Hadamard spaces. Our method for proving the ergodic theorem is based on the strong convexity of the square of the distance in Hadamard spaces as well as a lemma on the stability of the minimum point of a strongly convex function. In Section 5, we show that the main results of Sections 3 and 4 hold for continuous semigroup of contractions. In the last section we give an application of the main result to the convergence of generated semigroup by solutions of a Cauchy problem governed by a monotone vector field on Hadamard manifols. This result extends the result of Br\'{e}zis and Baillon \cite{BaillonBerzis1976} from Hilbert spaces to Hadamard manifolds.
 
\section{Preliminaries}

 A metric space $(X,d)$ is said to be a geodesic metric space if every two points $x,y$ of $X$ are jointed by a geodesic segment (geodesic) that is the image of the isometry
\vspace{-0.2cm}
\allowdisplaybreaks\begin{equation*}
\gamma :[0,d(x,y)]\longrightarrow X,
\end{equation*}
with $\gamma(0)=x, \gamma(d(x,y))=y$ and $d\big(\gamma(t),\gamma(t')\big)=|t-t'|, \quad \forall t,t'\in[0,1]$.
Such space is said uniquely geodesic if between any two points there is exactly one geodesic that for two arbitrary points $x,y$ is denoted by $[x,y]$. All points in $[x,y]$ are denoted by $z_t=(1-t)x\oplus ty$ for all $t\in[0,1]$, where $d(z_t,x)=td(x,y)$ and $d(z_t,y)=(1-t)d(x,y)$.

A geodesic triangle $\triangle:=\triangle(x_1,x_2,x_3)$ in a geodesic space $X$ consists of three points $x_1, x_2, x_3\in X$ as vertices and three geodesic segments joining each pair of vertices as edges. A comparison triangle for the geodesic triangle $\triangle$ is the triangle $\overline{\triangle}(x_1,x_2,x_3):=\triangle(\overline{x_1},\overline{x_2},\overline{x_3})$ in the Euclidean space $\Bbb{R}^2$ such that $d(x_i,x_j)=d_{\Bbb{R}^2}(\overline{x_i},\overline{x_j})$ for all $i,j=1,2,3$. A geodesic space $X$ is said to be a $CAT(0)$ space if for each geodesic triangle $\triangle$ in $X$ and its comparison triangle $\overline{\triangle}:=\triangle(\overline{x_1},\overline{x_2},\overline{x_3})$ in $\Bbb{R}^2$, the $CAT(0)$ inequality
\begin{equation*}
d(x,y)\leq d_{\Bbb{R}^2}(\overline{x},\overline{y}),
\end{equation*}
is satisfied for all $x,y\in \triangle$ and all comparison points $\overline{x}, \overline{y}\in \overline{\triangle}$ i.e., a geodesic triangle in $X$ is at least as thin as its comparison triangle in the Euclidean plane. A $CAT(0)$ space is uniquely geodesic. A complete $CAT(0)$ space is said Hadamard space. From now, we denote every Hadamard space by $\mathscr H$.

Let $(X,d)$ be a metric space, a mapping $T:X\longrightarrow X$ is called nonexpansive if $d(Tx,Ty)\leq d(x,y), \quad \forall x,y\in X$. $F(T)=\{x\in X : Tx=x\}$ denotes the set of all fixed points of the mapping $T$, which is closed and convex in Hadamard spaces (see \cite{kirkvalencia}). A function $f:\mathscr H\longrightarrow \Bbb R$ is said to be convex if for all $x,y\in \mathscr H$ and for all $\lambda\in [0,1]$
\begin{equation*}
f\big((1-\lambda )x\oplus\lambda y\big)\leq (1-\lambda)f(x)+\lambda f(y),
\end{equation*}
also $f$ is said to be strongly convex with parameter $\gamma>0$ if for all $x,y\in \mathscr H$
\begin{equation*}
f\big(\lambda x\oplus (1-\lambda)y\big)\leq \lambda f(x)+(1-\lambda)f(y)-\lambda(1-\lambda)\gamma d^2(x,y).
\end{equation*}
A function $f:\mathscr H\longrightarrow \mathbb{R}$ is said to be lower semicontinuous (shortly, lsc) if the set $\{x\in X : f(x)\leq \alpha\}$ is closed for all $\alpha\in \Bbb R$. Any lsc, strongly convex function in a Hadamard space has a unique minimizer \cite{bacak2014convex}. The following fact can be found in \cite[Lemma 2.5]{DHOMPONGSA20082572} and \cite[page 163]{bridson2011metric}:\\
a geodesic metric space is a $CAT(0)$ space if and only if the function $d^2(x,\cdot)$, for all $x$, is strongly convex with $\gamma=1$, i.e.,
For every three points $x_0 ,x_1, y \in X$ and for every $0<t<1$
\allowdisplaybreaks\begin{equation}
d^2(y, x_t)\leq(1-t)d^2(y,x_0)+td^2(y,x_1)-t(1-t)d^2(x_0,x_1),
\end{equation}
where $x_t=(1-t)x_0\oplus tx_1$ for every $t\in[0,1]$.

Berg and Nikolaev in \cite{Berg2008} introduced the notion of quasilinearization that is the map $\langle \cdot,\cdot\rangle:(X\times X)\times (X\times X)\longrightarrow \Bbb R$ defined by 
\begin{equation}\label{e12}
\langle\overset{\rightarrow}{ab},\overset{\rightarrow}{cd}\rangle=\frac{1}{2}\big\{d^2(a,d)+d^2(b,c)-d^2(a,c)-d^2(b,d) \big\} \quad a,b,c,d\in X,
\end{equation}
where a vector $\overset{\rightarrow}{ab}$ or $ab$ denotes a pair $(a,b)\in X\times X$. In \cite{Berg2008} they proved that $CAT(0)$ spaces satisfy the Cauchy-Schwarz like inequality:
\begin{equation*}
\langle ab, cd\rangle\leq d(a,b)d(c,d) \quad (a,b,c,d\in X).
\end{equation*}

Let $\mathscr{U}$ denote the set of all relatively compact bounded sequences in a Hadamard space $\mathscr H$, i.e., the bounded sequences $\{x_n\}$ such that the set $\{x_n:\ n=1.2.3,\cdots\}$ is relatively compact. For $\epsilon>0$, we say that $E\subset K$ is an $\epsilon-$net of $K$ if for each $x\in K$ there exists $e\in E$ such that $d(x,e)<\epsilon$. A subset $K$ of a metric space $(X,d)$ is said to be totally bounded if for each $\epsilon>0$ there exist $x_1,x_2,\cdots,x_n\in X$ such that $K\subseteq \displaystyle\bigcup_{i=1}^{n} B_{\epsilon}(x_i)$ or $K$ has a finite $\epsilon-$net. It is well-known that the totally boundedness coincides with relatively compactness in complete metric spaces (see for example \cite{brown2013topological}).

\section{Periodicity and Almost Periodicity}
Kurtz \cite{kurtz1970} proved that the periodicity (resp. almost periodicity) of a sequence $\{x_n\}$ in a Banach space is equivalent to the periodicity (resp. almost periodicity) of the scaler sequence  $\{x^*(x_n)\}$, where $x^*$ is an arbitrary element of the dual of the Banach space. Inspired of the results of Kurtz \cite{kurtz1970} in this section we show the equivalence between periodicity (resp. almost periodicity) of a sequence $\{x_n\}$ in a complete metric (or Hadamard) space and periodicity (resp. almost periodicity) of the real sequence $\{d(x_n,y)\}$, where $y$ is an arbitrary point of the metric space. These results correspond to the results of Kurtz \cite{kurtz1970} in a space where the dual concept is not naturally presented. Then we continue this section by studying the relation between almost periodicity and relatively compactness of orbits of a nonexpansive mapping in complete metric spaces. 
First we recall the notions of periodicity and almost periodicity in metric spaces that are natural extensions of the corresponding notions for real sequences. 
\begin{definition}
Let $\{x_n\}$ be a sequence in metric space $(X,d)$, we call this sequence is periodic with the period $p$ if there exists a positive integer $p$ such that $x_{n+p}=x_n$ for all $n$.\\
A sequence $\{x_n\}$ is called almost periodic if for each $\epsilon>0$ there are natural numbers $L=L(\epsilon)$ and $N=N(\epsilon)$ such that any interval $(k,k+L)$ where $k\geq 0$ contains at least one integer $p$ satisfying
\begin{equation}
d(x_{n+p},x_n)<\epsilon \quad\quad \forall n\geq N.
\end{equation}
\end{definition}

\begin{proposition}\label{theo-per}
A sequence $\{x_n\}$ in a Hadamard space $\mathscr{H}$ is periodic if and only if $\{d(x_n,x)\}$ is periodic for each $x\in\mathscr{H}$.
\end{proposition}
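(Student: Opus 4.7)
The forward implication is immediate: if $\{x_n\}$ has period $p$, then $d(x_{n+p},x)=d(x_n,x)$ for every $x\in\mathscr{H}$, so each $\{d(x_n,x)\}$ is periodic with the same period $p$.

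For the converse, suppose that for every $x\in\mathscr{H}$ the real sequence $\{d(x_n,x)\}$ is periodic. The plan is to produce a single integer $P$ which serves as a period of $\{x_n\}$. First, for every $k\geq 0$, if $p_k$ is any period of $\{d(x_n,x_k)\}$, then evaluating the identity $d(x_{n+p_k},x_k)=d(x_n,x_k)$ at $n=k$ gives $d(x_{k+p_k},x_k)=0$, hence $x_{k+p_k}=x_k$; in particular every point of the orbit $O:=\{x_n:n\geq 0\}$ is revisited. Moreover $O$ is bounded, because $\{d(x_n,x_0)\}$, being periodic, takes only finitely many values and so $O$ lies in a finite union of concentric spheres about $x_0$.

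The decisive step is to show that $O$ is a \emph{finite} set, which I would argue by contradiction. If $O$ were infinite, then—since $O$ is a bounded subset of our (locally compact) Hadamard space—there would exist an accumulation point $z\in\overline{O}$ and a subsequence $x_{n_k}\in O\setminus\{z\}$ with $x_{n_k}\to z$, so that $d(x_{n_k},z)\to 0^{+}$. This contradicts the periodicity of $\{d(x_n,z)\}$, since a periodic real sequence takes only finitely many values and its strictly positive values are bounded below away from $0$. Hence $O=\{v_1,\dots,v_K\}$ is a finite set.

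Once the orbit is finite, pick a period $P_i$ of each of the $K$ real sequences $\{d(x_n,v_i)\}$ and set $P:=\operatorname{lcm}(P_1,\dots,P_K)$, which is then a common period of all of them. For any $n$ there is $i$ with $x_n=v_i$, and the equality $d(x_{n+P},v_i)=d(x_n,v_i)=0$ forces $x_{n+P}=v_i=x_n$, so $\{x_n\}$ has period $P$. The main obstacle is the finiteness of the orbit: it rests on relative compactness of bounded subsets, which is transparent in the locally compact Hadamard setting (in particular in finite-dimensional Hadamard manifolds, the paper's setting of interest); in a fully general Hadamard space one would need to replace this step with a direct argument—possibly exploiting the strong convexity of $d^{2}(\cdot,\cdot)$ or the quasilinearization identity of Berg--Nikolaev—to rule out an infinite, uniformly separated bounded orbit.
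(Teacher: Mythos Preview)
Your forward direction matches the paper's. For the converse, your argument is sound in the locally compact case, but---as you yourself flag---the finiteness-of-orbit step rests on bounded sets being relatively compact. That is the gap: the proposition is stated for an \emph{arbitrary} Hadamard space, so your proof does not establish the full claim, and your closing sentence is an acknowledgment of incompleteness rather than a remedy.

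The paper fills this gap by a genuinely different, compactness-free route. Set
\[
E_k=\{x\in\mathscr{H}: d(x_{n+k},x)=d(x_n,x)\ \text{for all}\ n\};
\]
each $E_k$ is closed and $\mathscr{H}=\bigcup_{k\geq 1}E_k$, so by the Baire category theorem some $E_k$ contains a ball $B_r(x_0)$. Choosing $x\in B_r(x_0)\setminus\{x_0\}$, the paper arranges (via a result on parallel geodesic segments in Hadamard spaces) that the Berg--Nikolaev quasilinearization Cauchy--Schwarz inequality holds with equality for the pairs $(x_n,x_{n+k})$ and $(x_0,x)$, yielding
\[
\langle x_n x_{n+k},\, x_0 x\rangle \;=\; d(x_n,x_{n+k})\,d(x_0,x).
\]
Since $x,x_0\in E_k$, the left-hand side vanishes (the four squared distances cancel in pairs), forcing $d(x_n,x_{n+k})=0$ for every $n$. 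A single period $k$ is thus produced directly, with no compactness hypothesis, no orbit-finiteness argument, and no least common multiple.

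In the locally compact setting your route is more elementary---it needs only that a periodic real sequence has finite range---whereas the paper's argument leans on Baire category together with the equality case of Cauchy--Schwarz for parallel geodesics; but only the latter proves the proposition as stated.
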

\begin{proof}
The necessity is trivial because for each $x\in\mathscr{H}$, if $p$ is the period of $\{x_n\}$, we have $d(x_{n+p},x)=d(x_n,x)$ for all $n$. Then the real sequence $\{d(x_n,x)\}$ is periodic.\\
Conversely, if for each $x\in\mathscr{H}$, the sequence $\{d(x_n,x)\}$ is periodic, by the definition for each $x\in \mathscr H$ there exists $k$ such that $d(x_{n+k},x)=d(x_n,x)$ for all $n$. Take
\begin{equation}
E_k=\{x\in\mathscr{H} : d(x_{n+k},x)=d(x_n,x), \quad\forall n\}.
\end{equation}
It is clear that by the continuity of the metric function, each $E_k$ is closed and $\mathscr{H}=\displaystyle\bigcup_{k=1}^{\infty} E_k$. By the Bair category theorem in a complete metric space, at least one of $E_k$ contains an open ball. Let $B_r(x_0)$ be the open ball i.e., there is a positive integer $k$ such that $B_r(x_0)\subset E_k$. Based on \cite[Proposition 9.2.28]{burago2001course} for all $n$, in the open ball $B_r(x_0)$, there is a geodesic segment joining $x_0$ and $x_0\neq x\in B_r(x_0)$ that is parallel with the geodesic segment joining $x_n$ and $x_{n+k}$.
\begin{figure}[!h]
\centerline{\includegraphics[height=4cm]{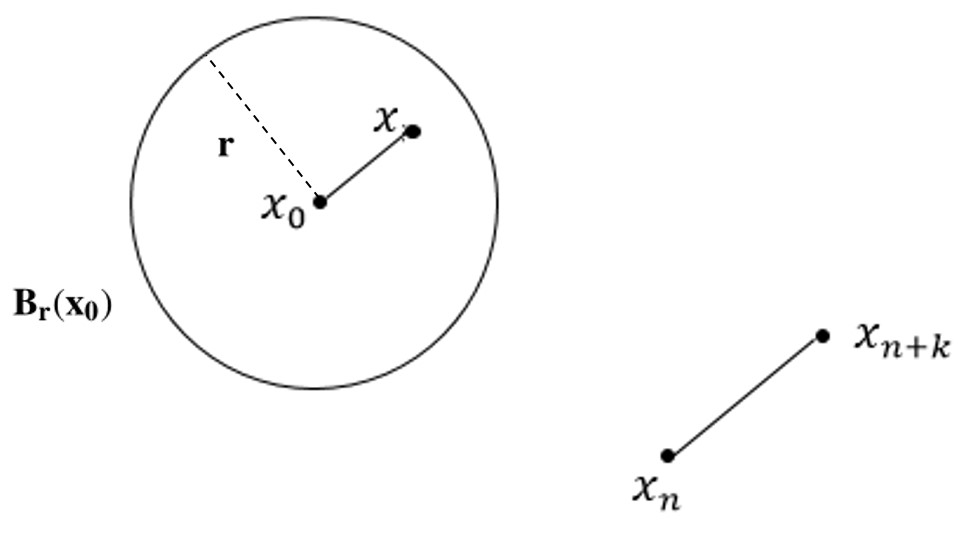}}
\end{figure}

\noindent Using quasilinearization and this fact that the Cauchy-Schwarz inequality for two parallel geodesic segments becomes equality (by \cite{Berg2007}), we have:
\begin{eqnarray}\label{e1}
&&\frac{1}{2}\bigg(d^2(x_n,x)+d^2(x_{n+k},x_0)-d^2(x_n,x_0)-d^2(x_{n+k},x) \bigg)\nonumber \\
&&=\langle x_nx_{n+k},x_0x \rangle \nonumber \\
&&= d(x_n,x_{n+k})d(x_0,x).
\end{eqnarray}
But by the definition of $E_k$ and since $x,x_0\in B_r(x_0)\subset E_k$, we get:
\begin{equation}\label{e2}
d^2(x_n,x)=d^2(x_{n+k},x) \quad \text{and} \quad d^2(x_n,x_0)=d^2(x_{n+k},x_0).
\end{equation}
Therefore \eqref{e1} and \eqref{e2} imply that:
\begin{equation*}
d(x_n,x_{n+k})d(x_0,x)=0,
\end{equation*}
since $d(x_0,x)\neq 0$, we obtain $d(x_n,x_{n+k})=0$ for all $n$ i.e., $x_n=x_{n+k}$ for all $n$. Thus, the sequence $\{x_n\}$ is periodic.
\end{proof}

\begin{proposition}\label{theo-almper}
A sequence $\{x_n\}$ in $\mathscr{U}$ is almost periodic if and only if $\{d(x_n,x)\}$ is almost periodic for each $x\in\mathscr{H}$.
\end{proposition}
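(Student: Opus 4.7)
My plan is to handle the two directions separately. For necessity, I would just invoke the reverse triangle inequality $|d(x_{n+p},x)-d(x_n,x)|\le d(x_{n+p},x_n)$, which shows that every $\epsilon$-almost-period of $\{x_n\}$ is automatically an $\epsilon$-almost-period of $\{d(x_n,x)\}$, so the implication follows directly from the definition and the constants $L(\epsilon),N(\epsilon)$ can be carried over unchanged.

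For the converse, the plan is to exploit that $\{x_n\}\in\mathscr{U}$, i.e. $\{x_n:n\ge 1\}$ is relatively compact and hence totally bounded, in order to reduce almost periodicity of $\{x_n\}$ to that of finitely many scalar distance sequences. Given $\epsilon>0$, I would first pick a finite $(\epsilon/3)$-net $\{y_1,\dots,y_m\}$ of the closure of the orbit. By hypothesis each $\{d(x_n,y_i)\}$ is almost periodic. The next step would be to produce a \emph{common} set of $\epsilon/3$-almost-periods for $i=1,\dots,m$: constants $L=L(\epsilon)$ and $N=N(\epsilon)$ such that every interval $(k,k+L)$, $k\ge 0$, contains some integer $p$ with
\[
|d(x_{n+p},y_i)-d(x_n,y_i)|<\tfrac{\epsilon}{3}\qquad\forall n\ge N,\ \forall i=1,\dots,m.
\]
Once such $p$ is in hand, for each $n\ge N$ I would choose $i$ with $d(x_n,y_i)<\epsilon/3$ and conclude via the triangle inequality
\[
d(x_{n+p},x_n)\le d(x_{n+p},y_i)+d(y_i,x_n) < \bigl(d(x_n,y_i)+\tfrac{\epsilon}{3}\bigr)+\tfrac{\epsilon}{3}<\epsilon,
\]
which is exactly almost periodicity of $\{x_n\}$.

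The main obstacle I anticipate is the common-almost-period step, since the definition given in the paper produces $L,N$ separately for each scalar sequence and there is no a priori reason for the same $p$ to work for all indices $i$. This is the classical fact that a finite family of almost periodic sequences admits a relatively dense set of common $\epsilon$-almost-periods, equivalently that the vector-valued sequence $\bigl(d(x_n,y_1),\dots,d(x_n,y_m)\bigr)\in(\mathbb{R}^m,\|\cdot\|_\infty)$ is itself almost periodic. If the authors prefer not to quote this, I would include a short direct argument: each set $T_i(\epsilon/3)$ of $\epsilon/3$-almost-periods of $\{d(x_n,y_i)\}$ is relatively dense, and a Bohr-type pigeonhole argument applied to the $\mathbb{R}^m$-valued sequence of $m$-tuples of distances shows that a finite intersection of such sets remains relatively dense. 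Modulo that lemma, everything else is routine bookkeeping with the triangle inequality and the $\epsilon/3$-net furnished by total boundedness.
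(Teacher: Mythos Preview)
Your proposal follows essentially the same route as the paper: the reverse triangle inequality for necessity, and for sufficiency an $\epsilon/3$-net $\{q_1,\dots,q_s\}$ from total boundedness, almost periodicity of the finitely many scalar sequences $\{d(x_n,q_i)\}$, and the triangle-inequality estimate $d(x_{n+p},x_n)\le d(x_{n+p},q_{i_n})+d(q_{i_n},x_n)<\epsilon$. The common-almost-period step you flag as the main obstacle is in fact glossed over in the paper, which simply sets $L=\max_i L_i$, $N=\max_i N_i$ and proceeds as though a single $p\in(k,k+L)$ works for every index $i$; your explicit invocation of the Bohr lemma on common almost-periods for finitely many scalar sequences makes your version the more careful of the two.
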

\begin{proof}
{\bf Necessity.}
Since $\{x_n\}$ is almost periodic, for each $\epsilon >0$, there exist natural numbers $L=L(\epsilon), N=N(\epsilon)$ such that every interval $(k,k+L)$ for any $k\geq 0$, contains at least one integer $p$ such that $d(x_{n+p},x_n)<\epsilon$ holds for all $n\geq N$. Also for each $x\in\mathscr{H}$, we have:
\begin{equation*}
|d(x_{n+p},x)-d(x_n,x)|\leq d(x_{n+p},x_n).
\end{equation*}
So the same $N, L$ and $p$ in the definition of almost periodicity of $\{x_n\}$ show that for each $x$, $\{d(x_n,x)\}$ is almost periodic.\\
{\bf Sufficiency.} Let $\epsilon>0$. There are $q_1,q_2,\cdots,q_s\in X$ such that the union of the family $\{B_{\frac{\epsilon}{3}}(q_i)\}_{i=1}^{s}$ contains $\{x_n\}$. Since for each $x$, $\{d(x_n,x)\}$ is almost periodic, for each $q_i$ where $1\leq i\leq s$, $\{d(x_n,q_i)\}$ is almost periodic. Therefore for all $1\leq i\leq s$, there are $L_i, N_i$ such that any interval $(k,k+L_i)$, where $k\geq 0$ contains $p$ satisfying
\begin{equation}\label{e3}
|d(x_{n+p},q_i)-d(x_n,q_i)|< \frac{\epsilon}{3}\quad \forall n\geq N_i.
\end{equation}
Take $L=\displaystyle\max_{1\leq i\leq s} L_i$ and $N=\displaystyle\max_{1\leq i\leq s} N_i$. For all $n\geq N$, there is $q_{i_n}$ such that\linebreak $x_n\in B_{\frac{\epsilon}{3}}(q_{i_n})$ i.e.,
\begin{equation}\label{e4}
d(x_n,q_{i_n})< \frac{\epsilon}{3}.
\end{equation}
By \eqref{e3} for any $k\geq 0$ there is $p\in (k,k+L)$ such that
\begin{equation}\label{e5}
d(x_{n+p},q_{i_n})< \frac{2\epsilon}{3}\quad \forall n\geq N.
\end{equation}
Thus \eqref{e4} and \eqref{e5} imply that for arbitrary $\epsilon >0$ there are $L, N$ such that for any $k\geq 0$ there is at least one $p$ in the interval $(k,k+L)$ satisfying 
\begin{equation*}
d(x_{n+p},x_n)< \epsilon \quad \forall n\geq N,
\end{equation*}
and the proof of almost periodicity of $\{x_n\}$ is complete.
\end{proof}
The following lemma shows that Theorem 1 of \cite{DAFERMOS197397} is also true for general metric spaces. We use this lemma to extend the next theorem from Banach spaces to complete metric spaces. It is easily seen that the proof of \cite{DAFERMOS197397} also works for metric spaces, then we recall the lemma without proof.   
\begin{lemma}\label{l1}
Let $(X,d)$ be a metric space and $T:X\to X$ be a nonexpansive mapping. If $w(x)$ is the set of strong subsequential limits of the iteration sequence $\{T^nx\}$, then $T$ is an isometry on $w(x)$.
\end{lemma}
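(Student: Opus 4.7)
The goal is to establish $d(Tu,Tv)=d(u,v)$ for every $u,v\in w(x)$. Nonexpansiveness of $T$ yields the inequality $d(Tu,Tv)\le d(u,v)$ for free (using continuity of $T$, $T^{n_k}x\to u$ forces $T^{n_k+1}x\to Tu$, so the distance can only drop in the limit), so the content of the lemma lies in the reverse direction, and my plan is to obtain it through a surjectivity-plus-compactness argument on the limit set $w(x)$.

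First I would check that $T$ sends $w(x)$ into itself: if $T^{n_k}x\to u$, then continuity of $T$ (automatic from nonexpansiveness) gives $T^{n_k+1}x\to Tu$, so $Tu\in w(x)$. The critical step is then to show that $T$ is also surjective on $w(x)$. Given $u\in w(x)$ with $T^{n_k}x\to u$, I would consider the shifted sequence $\{T^{n_k-1}x\}$; in the ergodic-theoretic context in which this lemma is applied, the orbit is relatively compact (this is the meaning of membership in $\mathscr{U}$), so the shifted sequence has a convergent subsequence with some limit $u'\in w(x)$, and continuity yields $Tu'=u$. Consequently $T|_{\overline{w(x)}}$ is a nonexpansive surjection of the compact metric space $\overline{w(x)}$ onto itself.

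At this point I would invoke the classical fact (of Freudenthal--Hurewicz type) that any nonexpansive surjection of a compact metric space is automatically an isometry. A standard argument maximizes $\sum_{i<j}d(p_i,p_j)$ over $N$-element subsets of $\overline{w(x)}$: surjectivity furnishes a preimage tuple whose pairwise distance sum is at least as large, nonexpansiveness caps it from above, and maximality forces each pair's distance to be preserved. Taking $N\to\infty$ with the maximizing tuples becoming $\varepsilon$-dense, together with continuity, promotes this to an isometry on all of $\overline{w(x)}$, hence on $w(x)$.

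The main obstacle is exactly the reverse inequality, since nonexpansiveness runs the wrong way; in this scheme, the entire weight falls on the surjectivity step, which in turn leans on relative compactness of the orbit. This reduction to distance estimates plus compactness is precisely what makes the Dafermos argument portable verbatim from Banach spaces to a general metric space, as the paper notes.
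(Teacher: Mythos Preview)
Your argument is correct under the extra hypothesis that the orbit $\{T^nx\}$ is relatively compact, and you are transparent about inserting it. However, the lemma as stated carries no such hypothesis, and the Dafermos--Slemrod proof that the paper invokes does not need it. The compactness enters your scheme twice---once to extract a preimage $u'$ for surjectivity, and again to run the Freudenthal--Hurewicz maximization---so as written you have proved a strictly weaker statement than the one recorded.

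The route the paper defers to is a direct recurrence argument that bypasses both surjectivity and Freudenthal--Hurewicz. Given $u\in w(x)$ with $T^{n_k}x\to u$, thinning so that $n_{k+1}-n_k\to\infty$ and setting $p_k=n_{k+1}-n_k$, nonexpansiveness gives $d(T^{p_k}u,u)\le d(u,T^{n_k}x)+d(T^{n_{k+1}}x,u)\to 0$, so $u\in w(u)$. Now for any $s\ge 1$ the sequence $n\mapsto d(T^nu,T^{n+s}u)$ is nonincreasing; evaluating along $n=p_k$ and using $T^{p_k}u\to u$, $T^{p_k+s}u\to T^su$ shows the limit equals the initial value, hence $d(T^nu,T^{n+s}u)=d(u,T^su)$ for all $n,s$: the orbit of $u$ is already isometric. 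Finally, interlacing subsequences gives $v=\lim_l T^{s_l}u$ for any $v\in w(x)$, whence
\[
d(Tu,Tv)=\lim_l d(Tu,T^{s_l+1}u)=\lim_l d(u,T^{s_l}u)=d(u,v).
\]
No compactness of the orbit or of $w(x)$ is used anywhere.

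So your Freudenthal--Hurewicz reduction is a genuinely different (and pleasant) packaging, and it is adequate for the uses later in the paper where $\{T^nx\}$ is indeed relatively compact; but it does not establish the lemma in the generality stated, whereas the cited argument does.
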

The next theorem extends \cite[Theorem 3.2]{Baillonbruckreich} from Banach spaces to complete metric spaces. Although the proof of the theorem is briefly summarized in \cite{Baillonbruckreich}, we facilitate the reader with the following more complete proof.  
\begin{theorem}\label{theo-almperreco}
Suppose that $(X,d)$ is a complete metric space and $T:X\rightarrow X$ is a nonexpansive mapping with $F(T)\neq\emptyset$. Then the sequence $\{T^nx\}$ is almost periodic if and only if it is relatively compact.
\end{theorem}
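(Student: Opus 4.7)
The plan is to handle the two implications separately. For the forward direction (almost periodic $\Rightarrow$ relatively compact), since $X$ is complete it is enough to establish total boundedness of the orbit. Given $\epsilon>0$, take the pair $L,N$ from the definition of almost periodicity and I claim that $\{T^m x : 0\le m< N+L\}$ is a finite $\epsilon$-net. Indeed, for $n\ge N+L$ the interval $(n-N-L,\,n-N)$ has length $L$ and a nonnegative left endpoint, so it contains some $p$ with $d(T^{k+p}x,T^k x)<\epsilon$ for every $k\ge N$; taking $k=n-p\in(N,N+L)$ gives $d(T^n x,T^{n-p}x)<\epsilon$, and the remaining indices $n<N+L$ are already in the net.

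For the reverse direction, set $K=\overline{\{T^n x\}}$, which is compact, and let $w(x)=\bigcap_N \overline{\{T^n x:n\ge N\}}$ be the $\omega$-limit set. Then $w(x)$ is nonempty, compact and $T$-invariant, and by Lemma~\ref{l1} the restriction $T|_{w(x)}$ is an isometry; a short diagonal argument (given $y\in w(x)$ with $T^{n_j}x\to y$, extract a subsequential limit $z$ of $T^{n_j-1}x$, whence $Tz=y$) shows that $T|_{w(x)}$ is onto, hence a bijective isometry of a compact space. Two scalar quantities govern the analysis: by nonexpansiveness, $n\mapsto d(T^{n+p}x,T^n x)$ is non-increasing, so $c_p:=\lim_n d(T^{n+p}x,T^n x)$ exists, and passing to a subsequence $T^{n_j}x\to z\in w(x)$ together with continuity of $T^p$ and $d$ yields $c_p=d(T^p z,z)$ for \emph{every} $z\in w(x)$.

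The crux is to show that $R_\epsilon:=\{p\in\mathbb{N}:c_p<\epsilon\}$ is relatively dense. The family $\{T^n|_{w(x)}:n\ge 0\}$ is equicontinuous (isometries) and pointwise relatively compact (into compact $w(x)$), so by Arzel\`a--Ascoli its closure $G$ in $C(w(x),w(x))$ under the sup metric is compact. The elements of $G$ are all isometries and $G$ is closed under composition; the standard observation that for any Cauchy sub-family $T^{n_j}\to h$ one has $T^{n_k-n_j}\to\mathrm{id}$ puts $\mathrm{id}\in G$ and makes $G$ a compact topological group. Covering $G$ by finitely many left translates $T^{a_1}U,\ldots,T^{a_r}U$ of the neighborhood $U=\{g\in G:\sup_{y\in w(x)} d(gy,y)<\epsilon\}$ of $\mathrm{id}$ shows that every long enough integer interval contains some $q\ge 0$ with $T^q\in U$; for such $q$, $c_q=d(T^q z,z)<\epsilon$ for any $z\in w(x)$, so $R_\epsilon$ is relatively dense.

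To convert this into almost periodicity of $\{T^n x\}$ at $x$ itself, I use that $d(T^n x,w(x))\to 0$, which follows from compactness of $K$ because any subsequential limit of $\{T^n x\}$ lies in $w(x)$. Given $\epsilon>0$, choose $N$ with $d(T^n x,w(x))<\epsilon/4$ for $n\ge N$, and let $L$ be the relative density gap for $R_{\epsilon/2}$. For any $k\ge 0$ pick $p\in R_{\epsilon/2}\cap(k,k+L)$; for $n\ge N$ choose $z\in w(x)$ with $d(T^n x,z)<\epsilon/4$ and use nonexpansiveness of $T^p$ together with the triangle inequality to estimate
\begin{equation*}
d(T^{n+p}x,T^n x)\le d(T^{n+p}x,T^p z)+d(T^p z,z)+d(z,T^n x)<\tfrac{\epsilon}{4}+\tfrac{\epsilon}{2}+\tfrac{\epsilon}{4}=\epsilon.
\end{equation*}
The main obstacle will be the compact-group argument in the third paragraph, which packages Arzel\`a--Ascoli with a pigeonhole/translation step; once that yields the relative density of $R_\epsilon$, the remainder is a clean combination of the monotonicity of $d(T^{n+p}x,T^n x)$ and the fact that the orbit is asymptotically absorbed into $w(x)$.
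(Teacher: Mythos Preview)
Your proof is correct. For the forward direction you spell out the total boundedness argument that the paper dispatches by citing a reference; the content is the same.

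For the reverse direction the two arguments diverge. The paper fixes a single point $y_0\in w(x)$, observes that the orbit $\{y_n=T^n y_0\}$ is an \emph{isometric sequence} (i.e.\ $d(y_{n+i},y_{m+i})=d(y_n,y_m)$ for all $n,m,i$), and then runs a bare-hands pigeonhole argument on a finite $\epsilon$-net of this single orbit to produce the required almost-periods directly, with no function-space machinery. The transfer to $\{T^n x\}$ is done by choosing one index $N$ with $d(T^N x,y_0)<\epsilon/3$ and invoking the monotonicity of $n\mapsto d(T^{n+p}x,T^n x)$. Your route instead works with all of $w(x)$ at once: you build the compact group $G=\overline{\{T^n|_{w(x)}\}}$ via Arzel\`a--Ascoli and read off syndeticity of the $\epsilon$-periods from a finite cover of $G$ by translates of a neighborhood of the identity, then transfer via $d(T^n x,w(x))\to 0$. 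Your approach is more conceptual and yields the extra structural fact that $c_p=d(T^p z,z)$ is the same for every $z\in w(x)$; the paper's approach is lighter, needing only the pigeonhole principle and the isometric-sequence identity, at the cost of a slightly more delicate combinatorial step. The final three-term triangle estimates in both proofs are essentially identical.
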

\begin{proof}
Necessity is obvious because by \cite{vesel2011} an almost periodic sequence is totally bounded, and we know in a complete metric space totally boundedness is equivalent to relatively compactness. For converse by assumption, $\{T^nx\}$ is relatively compact. If we denote the set of all strongly subsequential limits of $\{T^nx\}$ with $w(x)$, then this set is nonempty, closed and consequently compact by relatively compactness of the iteration sequence. Also, by Lemma \ref{l1} $T$ is isometry on $w(x)$. Take an element $y_0$ in $w(x)$ and set $y_n=T^ny_0$. Clearly since $T$ is isometry, $\{y_n\}$ is a relatively compact isometric sequence i.e., for all $n, m, i$ in $\mathbb{Z}^+$ we have $d(y_{n+i},y_{m+i})=d(y_n,y_m)$. First we show that $\{y_n\}$ is almost periodic sequence.\\
By relatively compactness, let $\{y_{m_i}\}_{i=1}^{r}$ be a finite $\epsilon-$net for $\{y_n\}$.
\begin{figure}[!h]
\centerline{\includegraphics[height=3cm]{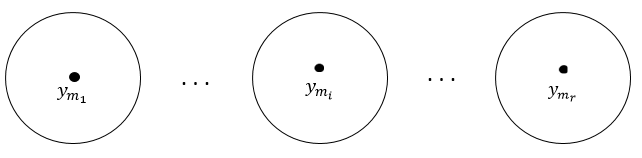}}
\end{figure}
Set $L=r+1$ and $N=0$ in the definition of almost periodicity because for arbitrary $n\geq 0$, $y_n$ belongs to $B_{\epsilon}(y_{m_i})$ where $1\leq i\leq r$. For $y_n, y_{n+1},\cdots, y_{n+r-1}$ if at least two points belong to one ball, in fact $y_{n+i}$ and $y_{n+i+p}$ where $p<L=r+1$ are in the same ball and by isometricity of the sequence $\{y_n\}$ we have:
\begin{equation}\label{e6}
d(y_{n+i},y_{n+i+p})=d(y_n,y_{n+p})<\epsilon.
\end{equation}
If none of $y_n, y_{n+1},\cdots, y_{n+r-1}$ belong to one ball, certainly $y_{n+r}$ with one of them place exactly into a ball, so $y_{n+j}$ and $y_{n+j+p}$ where $p<L=r+1$ are into the same ball and again by isometricity of the sequence $\{y_n\}$, similar to \eqref{e6} $d(y_n,y_{n+p})<\epsilon$. Thus, we find one $p$ in the interval $(0,r+1)$ such that by isometricity of the sequence $\{y_n\}$ for all $n\geq 0$, $d(y_n,y_{n+p})<\epsilon$. Similar arguments apply to the case arbitrary $k$ for the existence of $p$ in the interval $(k,k+L)$ and this implies that $\{y_n\}$ is almost periodic.\\
Now since $y_0$ is limit point of $\{T^nx\}$, there exists $N_1=N(\frac{\epsilon}{3})$ such that for all $n\geq N_1$:
\begin{equation}\label{e7}
d(T^nx,y_0)<\frac{\epsilon}{3}.
\end{equation}
Also we see that $\{y_n\}$ is almost periodic, thus by necessity for $\frac{\epsilon}{3}$ there is a finite $\frac{\epsilon}{3}-$net with $r$ ball that cover the sequence $\{y_n\}$, so if $L=r+1$ and $N_2=0$, for any $k\geq 0$ there is at least one $p$ in the interval $(k,k+L)$ such that for all $n\geq 0$, $d(y_{n+p},y_n)<\frac{\epsilon}{3}$ and therefore: 
\begin{equation}\label{e8}
d(T^py_0,y_0)<\frac{\epsilon}{3}.
\end{equation}
Finally for arbitrary $\epsilon>0$, if we set $N=N_1$ and $L=r+1$ where $r$ is the number of balls in $\frac{\epsilon}{3}-$net, since $T$ is nonexpansive and by \eqref{e7} and \eqref{e8} if for all $k\geq 0$ in the interval $(k,k+L)$ we take the same $p$ in \eqref{e8}, for all $n\geq N$ we obtain:
\begin{eqnarray}
d(T^{n+p}x,T^nx)&\leq & d(T^{N+p}x,T^Nx) \nonumber \\
&\leq &d(T^{N+p}x,T^py_0)+d(T^py_0,y_0)+d(y_0,T^Nx)\nonumber\\
&\leq &d(T^Nx,y_0)+d(T^py_0,y_0)+d(y_0,T^Nx)\nonumber\\
&\leq &\frac{\epsilon}{3}+\frac{\epsilon}{3}+\frac{\epsilon}{3}=\epsilon, \nonumber
\end{eqnarray}
and this completes the proof.
\end{proof}

An immediate consequence of Theorem \ref{theo-almperreco} is the almost periodicity of orbits of a nonexpansive mapping with nonempty fixed points set in a locally compact metric space. We will use this property in the next section to prove the mean ergodic theorem.

\section{Ergodic Theorem for Nonexpansive Mappings}
In Hadamard spaces, Liimatainen \cite{Liimatainen2012} proved a result related to the mean ergodic convergence for the Karcher mean of orbits of nonexpansive mappings. The best result of \cite{Liimatainen2012} for ergodic convergence of orbits of a general nonexpansive mapping implies that every weak cluster point (for the definitions of weak cluster point and weak convergence in Hadamard spaces see \cite{bacak2014convex, Kirk2008weak}) of the Karcher mean (see the following definition) of a bounded orbit is a fixed point of the mapping. But the weak convergence of the orbit is still an open problem in general Hadamard spaces, because the set of weak cluster points is not necessarily a singleton. In this paper using the notion of almost periodicity studied in Section 2 in Hadamard spaces (Proposition \ref{theo-almper} and Theorem \ref{theo-almperreco}), we prove the almost convergence of the orbit (which is stronger than the ergodic convergence) in locally compact Hadamard spaces including Hadamard (finite dimensional) manifolds. First we recall the Karcher mean concept in Hadamard spaces.
\begin{definition}\label{Karcher mean}
Give a sequence $\{x_n\}$ in a Hadamard space and $n\in \mathbb{N}$ and $k\in \mathbb{N}\cup\{0\}$, we define the functions
\begin{equation}\label{e22}
\mathcal{F}_n(x)=\frac{1}{n}\displaystyle\sum_{i=0}^{n-1} d^2(x_i,x),
\end{equation}
and
\begin{equation}\label{e23}
\mathcal{F}_n^k(x)=\frac{1}{n}\displaystyle\sum_{i=0}^{n-1} d^2(x_{k+i},x).
\end{equation}
From \cite[Proposition 2.2.17]{bacak2014convex} we know that these functions have unique minimizers. For $\mathcal{F}_n(x)$ the unique minimizer is denoted by $\sigma_n(x_0,\ldots ,x_{n-1})$ (shortly, $\sigma_n$) and is called the mean of $x_0,\ldots ,x_{n-1}$. Also for $\mathcal{F}_n^k(x)$ the unique minimizer is denoted by $\sigma_n^k(x_k,\ldots ,x_{k+n-1})$ (shortly, $\sigma_n^k$) and is called the mean of $x_k,\ldots ,x_{k+n-1}$. These means are known as the Karcher means \cite{karcher1977} and in Hilbert spaces they coincide with the linear (usual) means (see \cite[example 2.2.5]{bacak2014convex}).
 A sequence $\{x_n\}$ in a Hadamard space $\mathscr H$ is called the Cesaro convergent or the mean convergent  (resp. almost convergent) to $x\in X$, if $\sigma_n$ (resp. $\sigma_n^k$) converges (resp. converges uniformly in $k$) to $x$. 
\end{definition}
\noindent For an orbit $\{T^n x | n=0,1,2,\ldots\}$, $\sigma_n(x)$ and $\sigma_n^k(x)$, are defined respectively as the unique minimizers of the functions 
\begin{equation*}
\mathcal{F}[x]_n(y)=\frac{1}{n}\displaystyle\sum_{i=0}^{n-1} d^2(T^ix,y),
\end{equation*}
and
\begin{equation*}
\mathcal{F}[x]_n^k(y)=\frac{1}{n}\displaystyle\sum_{i=0}^{n-1} d^2(T^{k+i}x,y).
\end{equation*}

 Two following lemmas are needed to state the main result.
\begin{lemma}\label{lemstconfun1}
Let $(\mathscr H, d)$ be a Hadamard space, $f:\mathscr H\longrightarrow \Bbb R$ be a lower semicontinuous and strongly convex function and $x$ be the unique minimizer of $f$. Then for every $y$ out of a neighborhood centered at $x$ with radius $\delta$, we have:
\begin{equation*}
f(x)<f(y) - \big(d(x,y)-\delta\big)d(x,y).
\end{equation*}
\end{lemma}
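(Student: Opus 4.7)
The plan is to exploit strong convexity along the geodesic joining $x$ and $y$. Interpreting ``$y$ out of a neighborhood centered at $x$ with radius $\delta$'' as $d(x,y) > \delta$ (the boundary case $d(x,y) = \delta$ reduces to $f(x) < f(y)$, which is immediate from the uniqueness of the minimizer), I would place the auxiliary point $z := (1-t)x \oplus ty$ with $t := \delta / d(x,y) \in (0,1)$. By construction $z$ sits exactly on the boundary of that neighborhood, $d(x,z) = t\, d(x,y) = \delta$, and in particular $z \neq x$.

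Next I would apply strong convexity of $f$ (taking $\gamma = 1$, in keeping with the paper's convention that $d^2(x,\cdot)$ is strongly convex with $\gamma = 1$ in a $CAT(0)$ space) at this point along the geodesic to obtain
$$f(z) \leq (1-t) f(x) + t\, f(y) - t(1-t)\, d^2(x,y).$$
Since $x$ is the \emph{unique} minimizer of $f$ and $z \neq x$, the strict inequality $f(x) < f(z)$ holds. Chaining the two bounds, cancelling $(1-t)f(x)$ from both sides, dividing through by $t > 0$, and rewriting $(1-t)\, d^2(x,y) = (d(x,y) - \delta)\, d(x,y)$ then delivers exactly the desired inequality.

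The main (and essentially only) delicate point is securing the strict inequality $f(x) < f(z)$; this uses not merely that $x$ minimizes $f$ but that it is the \emph{unique} minimizer, which the paper has already established via \cite{bacak2014convex} for any lsc strongly convex function on a Hadamard space. Beyond that, the argument is a single substitution into the defining inequality for strong convexity along the geodesic $[x,y]$ followed by an algebraic tidying, so I do not anticipate any further obstacle.
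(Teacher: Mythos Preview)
Your proof is correct and essentially identical to the paper's: both pick the point on the geodesic $[x,y]$ at distance $\delta$ from $x$, apply the strong convexity inequality there together with the strict minimality $f(x)<f(z)$, and then cancel and divide. The only cosmetic difference is the choice of convexity parameter (your $t=\delta/d(x,y)$ versus the paper's $\lambda_0=1-\delta/d(x,y)$), which amounts to swapping the roles of $\lambda$ and $1-\lambda$.
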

\begin{proof}
Let $y$ be a point out of a neighborhood centered at $x$ with radius $\delta$. The geodesic segment joining $x$ and $y$ intersects this ball at $\lambda_0 x+(1-\lambda_0)y$, where $0<\lambda_0<1$.
\begin{figure}[!h]
\centerline{\includegraphics[height=3cm]{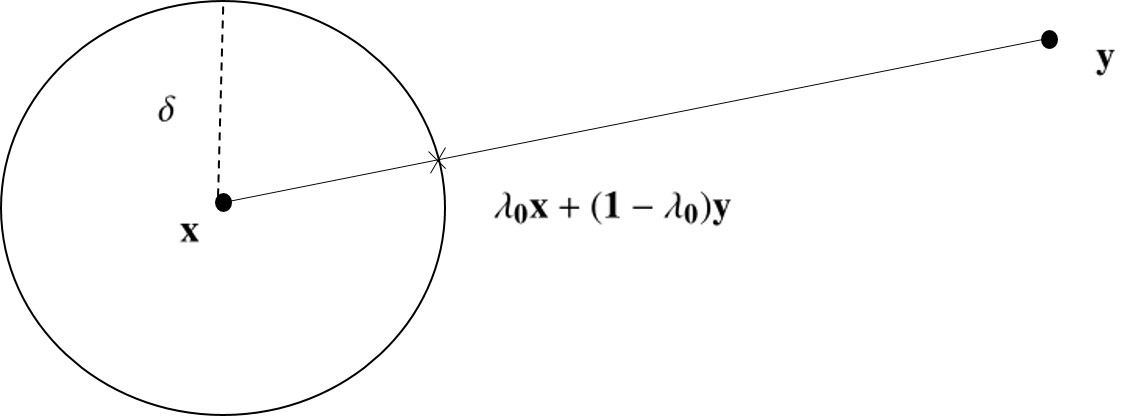}}
\end{figure}
\noindent By properties of the metric function in Hadamard space, we have $(1-\lambda_0)d(x,y)=\delta$ so $d(x,y)=\delta +\lambda_0 d(x,y)$
and hence
\begin{equation}\label{ineqstconeq1}
\lambda_0=\frac{d(x,y)-\delta}{d(x,y)}.
\end{equation}
Since $x$ is the unique minimizer of the strongly convex function $f$, we have:
\allowdisplaybreaks\begin{eqnarray}
f(x)& <& f\big(\lambda_0 x+(1-\lambda_0)y\big)\nonumber \\
&\leq &\lambda_0 f(x)+(1-\lambda_0)f(y)- \lambda_0(1-\lambda_0) d^2(x,y), \nonumber
\end{eqnarray}
therefore
\begin{equation*}
(1-\lambda_0)f(x)<(1-\lambda_0)f(y) - \lambda_0(1-\lambda_0)d^2(x,y).
\end{equation*}
Dividing by $(1-\lambda_0)$, we get the desired result by \eqref{ineqstconeq1}.
\end{proof}
\begin{lemma}\label{lemstconfun2}
Let $(\mathscr H, d)$ be a Hadamard space and $\{f_n^k\}_{k,n}$ be the sequence of convex functions on $\mathscr H$. If $\{x_n^k\}_{k,n}$ is a sequence of minimum points of $\{f_n^k\}_{k,n}$ and $x$ is the unique minimizer of the strongly convex function $f$, satisfying:
\begin{enumerate}
\renewcommand{\theenumi}{\Roman{enumi}}
\item\label{lemstconfun2i1} the sequence $\{f_n^k\}$ is pointwise convergent to $f$ as $n$ tends to infinity uniformly in $k\geq 0$,
\item\label{lemstconfun2i2} $\displaystyle\limsup_{n\rightarrow\infty} \sup_{k\geq 0}\big(f(x_n^k)-f_n^k(x_n^k)\big)\leq 0$.
\end{enumerate}
Then $x_n^k$ converges to $x$ uniformly in $k\geq 0$ as $n\rightarrow \infty$.
\end{lemma}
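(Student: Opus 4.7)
The plan is to reduce the problem to showing that the function values $f(x_n^k)$ converge to $f(x)$ uniformly in $k$, and then to convert that energy-level estimate into the desired distance estimate using the quantitative stability bound in Lemma \ref{lemstconfun1}. Structurally this is the uniform-in-$k$ analogue of the classical variational principle that minimizers of converging objectives converge.

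First I would establish the uniform convergence of function values. Writing
\begin{equation*}
f(x_n^k) - f(x) = \bigl[f(x_n^k) - f_n^k(x_n^k)\bigr] + \bigl[f_n^k(x_n^k) - f_n^k(x)\bigr] + \bigl[f_n^k(x) - f(x)\bigr],
\end{equation*}
the first bracket has nonpositive $\limsup$ in $n$ after taking $\sup_k$, by hypothesis (II); the middle bracket is $\leq 0$ since $x_n^k$ minimizes $f_n^k$; and the last bracket tends to $0$ uniformly in $k$ by hypothesis (I). Combined with the lower bound $f(x_n^k)\geq f(x)$ (since $x$ is the unique minimizer of $f$), this gives $\lim_{n\to\infty}\sup_{k\geq 0}|f(x_n^k)-f(x)|=0$.

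Next I would apply Lemma \ref{lemstconfun1} to turn this into a distance estimate. Fix $\epsilon>0$ and suppose some $x_n^k$ satisfies $d(x_n^k,x)\geq\epsilon$. Using the lemma with $\delta=\epsilon/2$ yields
\begin{equation*}
f(x_n^k) > f(x) + \bigl(d(x_n^k,x)-\tfrac{\epsilon}{2}\bigr)d(x_n^k,x) \geq f(x) + \tfrac{\epsilon^2}{2},
\end{equation*}
so contrapositively, once $n$ is large enough that $\sup_{k\geq 0}\bigl(f(x_n^k)-f(x)\bigr)<\epsilon^2/2$, we must have $d(x_n^k,x)<\epsilon$ for every $k\geq 0$, giving the claimed uniform convergence.

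I do not anticipate a genuine obstacle: the real work has already been absorbed into Lemma \ref{lemstconfun1}, which translates the strong convexity of $f$ into a quadratic coercivity bound around the minimizer. The only point requiring a bit of care is checking that the uniformity in $k$ is preserved through each term of the decomposition, which is immediate from the way hypotheses (I) and (II) are formulated. Note also that no a priori boundedness of $\{x_n^k\}$ is needed, since the coercivity estimate only strengthens if $d(x_n^k,x)$ happens to be large.
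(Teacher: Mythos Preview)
Your proof is correct and follows essentially the same approach as the paper: both combine the minimality inequality $f_n^k(x_n^k)\le f_n^k(x)$ with hypotheses (I) and (II) to control $f(x_n^k)-f(x)$, and then invoke Lemma~\ref{lemstconfun1} to convert this into a bound on $d(x_n^k,x)$. The paper packages the argument as a proof by contradiction while you give a direct version, but the underlying three-term decomposition and the use of the strong-convexity estimate are identical.
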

\begin{proof}
The goal is to prove that $\displaystyle\lim_{n\rightarrow\infty}\sup_{k\geq 0} d(x_n^k,x)=0$. Suppose to the contrary,
\begin{equation*}
\exists \delta>0 \quad :\quad \forall N\quad \exists n\geq N\quad \text{such that}\quad  \sup_{k\geq 0} d(x_n^k,x)\geq\delta.
\end{equation*}
Therefore for each $0<\epsilon<\delta$ there exist $k=k(\epsilon,n)$, and a subsequence of $\{x_n^k\}_n$ (we denote it by the same sequence $\{x_n^k\}_n$) such that $d(x_n^k,x)\geq\delta-\epsilon$.
If we choose $\epsilon<\frac{\delta}{2}$, we have:
\begin{equation}\label{ineqstconeq2}
d(x_n^k,x)\geq\delta-\epsilon>\frac{\delta}{2}.
\end{equation}
Using Lemma \ref{lemstconfun1} for $\frac{\delta}{2}$ instead of $\delta$, we get:
\begin{equation}\label{ineqstconeq3}
f(x)<f(x_n^k) - \big(d(x,x_n^k)-\frac{\delta}{2}\big)d(x,x_n^k).
\end{equation}
On the other hand since $x_n^k$ is the minimum point of $f_n^k$, we have:
\begin{equation}\label{ineqstconeq4}
f_n^k(x_n^k)\leq f_n^k(x).
\end{equation}
By \eqref{ineqstconeq2}, \eqref{ineqstconeq3} and \eqref{ineqstconeq4}, we obtain:
\allowdisplaybreaks\begin{eqnarray}
(\frac{\delta}{2}-\epsilon)\frac{\delta}{2}&< & \big(d(x,x_n^k)-\frac{\delta}{2}\big)d(x,x_n^k)\nonumber \\
&< &f(x_n^k)-f(x)+f_n^k(x)-f_n^k(x_n^k). \nonumber
\end{eqnarray}
By the assumptions \eqref{lemstconfun2i1} and \eqref{lemstconfun2i2}, we get a contradiction. Thus $x_n^k$ converges to $x$ uniformly in $k\geq 0$ as $n\rightarrow \infty$.
\end{proof}
\begin{theorem}\label{maintheo1}
Let $C$ be a nonempty, closed and convex subset of a locally compact Hadamard space $\mathscr{H}$ and $T:C\rightarrow C$ be a nonexpansive mapping with $F(T)\neq\emptyset$. Then the sequence $\{T^nx\}$ is almost convergent to a fixed point of $T$.
\end{theorem}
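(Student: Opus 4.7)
The plan is to combine the almost periodicity of the orbit with the stability Lemma \ref{lemstconfun2} applied to the Karcher-mean functionals $\mathcal F[x]_n^k$. First, the existence of a fixed point $p\in F(T)$ gives $d(T^nx,p)\leq d(x,p)$, so the orbit $\{T^nx\}$ is bounded. In a locally compact Hadamard space the Hopf--Rinow theorem implies that closed bounded sets are compact, so $\{T^nx\}$ is relatively compact. Theorem \ref{theo-almperreco} then shows that $\{T^nx\}$ is almost periodic, and Proposition \ref{theo-almper} gives almost periodicity of $\{d(T^nx,y)\}$, and hence of the bounded scalar sequence $\{d^2(T^nx,y)\}$, for every $y\in\mathscr H$. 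Since every almost periodic scalar sequence is almost convergent (Bohr/Lorentz), the limit
\begin{equation*}
f(y):=\lim_{n\to\infty}\frac{1}{n}\sum_{i=0}^{n-1}d^2(T^{k+i}x,y)=\lim_{n\to\infty}\mathcal F[x]_n^k(y)
\end{equation*}
exists for every $y$ independently of $k$, with convergence uniform in $k\geq 0$; this is exactly hypothesis (I) of Lemma \ref{lemstconfun2}.

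Next I would identify $f$ and its minimizer. Each $\mathcal F[x]_n^k$ is the average of the functions $d^2(T^{k+i}x,\cdot)$, which are strongly convex with parameter $1$; this property passes to pointwise limits, so $f$ is strongly convex with parameter $1$ and, as a pointwise limit of functions that are equi-Lipschitz on bounded sets, continuous. Hence $f$ admits a unique minimizer $x^*$. To prove $x^*\in F(T)$, nonexpansiveness gives $d(T^ix,Tz)\leq d(T^{i-1}x,z)$ for $i\geq 1$, and shifting the index yields
\begin{equation*}
\mathcal F[x]_n^0(Tz)\leq \frac{d^2(x,Tz)}{n}+\frac{n-1}{n}\mathcal F[x]_{n-1}^0(z),
\end{equation*}
so letting $n\to\infty$ gives $f(Tz)\leq f(z)$ for every $z$. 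Taking $z=x^*$ and invoking uniqueness of the minimizer forces $Tx^*=x^*$.

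It remains to verify hypothesis (II). By strong convexity of $\mathcal F[x]_n^k$ with parameter $1$ one has $d^2(\sigma_n^k,p)\leq \mathcal F[x]_n^k(p)\leq d^2(x,p)$, so every Karcher mean $\sigma_n^k$ lies in the closed ball $K:=\overline{B_{d(x,p)}(p)}$, which is compact in a locally compact Hadamard space. On $K$ the family $\mathcal F[x]_n^k$ is equi-Lipschitz (via the standard estimate $|d^2(z,y)-d^2(z,y')|\leq 2R\,d(y,y')$ with $y,y'\in K$ and $z$ ranging in the bounded orbit). Combining equicontinuity on the compact set $K$ with the pointwise convergence $\mathcal F[x]_n^k\to f$ uniform in $k$ upgrades, by an Arzel\`a--Ascoli type argument, to uniform convergence on $K$ uniformly in $k$. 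Therefore $|f(\sigma_n^k)-\mathcal F[x]_n^k(\sigma_n^k)|\to 0$ as $n\to\infty$ uniformly in $k\geq 0$, which is (II); Lemma \ref{lemstconfun2} then yields $\sigma_n^k\to x^*$ uniformly in $k$, i.e.\ almost convergence of $\{T^nx\}$ to the fixed point $x^*$.

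The step I expect to be the main obstacle is precisely this final upgrade from pointwise-uniform-in-$k$ convergence to uniform-on-a-compact-set convergence demanded by (II); this is where local compactness enters twice (to compactify both the orbit and the ball containing the Karcher means) and where the equi-Lipschitz estimate for squared-distance averages is crucial. A secondary, more routine point is checking that the limit $f$ really is independent of $k$, which is the content of almost convergence of the scalar sequences $\{d^2(T^nx,y)\}$ guaranteed by Proposition \ref{theo-almper} together with the Bohr--Lorentz implication.
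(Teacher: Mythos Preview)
Your proposal is correct and follows the same overall architecture as the paper: bound the orbit via a fixed point, use local compactness and Theorem~\ref{theo-almperreco} to get almost periodicity, pass to almost convergence of the scalar sequences $\{d^2(T^nx,y)\}$ via Proposition~\ref{theo-almper} and the Bohr--Lorentz implication, and then feed the functionals $\mathcal F[x]_n^k$ into Lemma~\ref{lemstconfun2}; the fixed-point argument for the minimizer is also the same as the paper's.

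The one genuine difference is in how hypothesis~(II) of Lemma~\ref{lemstconfun2} is verified. The paper argues by contradiction: assuming $\limsup_n\sup_k\big(\mathcal F[x](\sigma_n^k)-\mathcal F[x]_n^k(\sigma_n^k)\big)>\lambda>0$, it extracts a subsequence, invokes the defining limit~\eqref{eqmainresu2} at the moving point $\sigma_{n_i}^k$, and sets the averaging length $p$ equal to $n_i$ to force a contradiction. Your route instead confines all Karcher means $\sigma_n^k$ to a single closed ball $K$ (via the variance inequality and $p\in F(T)$), invokes Hopf--Rinow to make $K$ compact, and then upgrades the pointwise-uniform-in-$k$ convergence of the equi-Lipschitz family $\{\mathcal F[x]_n^k\}$ to uniform convergence on $K$ by a finite $\varepsilon$-net argument; this yields the stronger statement $\sup_k|\mathcal F[x](\sigma_n^k)-\mathcal F[x]_n^k(\sigma_n^k)|\to 0$. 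Your approach spends local compactness twice (on the orbit and on $K$) but cleanly sidesteps the delicate point in the paper's argument where the threshold $p_0$ implicitly depends on the evaluation point $\sigma_{n_i}^k$, and hence on $i$. Both routes are short; yours is somewhat more robust.
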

\begin{proof}
Let $x\in \mathscr H$. Since $F(T)\neq\emptyset$, $\{T^nx\}$ is bounded and by the local compactness of the space, it is relatively compact. Theorem \ref{theo-almperreco} implies that $\{T^nx\}$ is almost periodic and by Proposition \ref{theo-almper} $\{d^2(T^nx,y)\}$ is almost periodic for all $y\in \mathscr H$. By \cite{lorentz1948}(see also \cite{bohr1947almost}) the scalar sequence $\{d^2(T^nx,y)\}$ is almost convergent for all $y\in \mathscr H$. Define:
\begin{equation}\label{eqmainresu1}
\mathcal{F}[x]_n^k(y):=\displaystyle\frac{1}{n}\sum_{i=0}^{n-1}d^2(T^{k+i}x,y),
\end{equation}
and
\begin{equation}\label{eqmainresu2}
\mathcal{F}[x](y):=\displaystyle\lim_{n\rightarrow\infty}\frac{1}{n}\sum_{i=0}^{n-1}d^2(T^{k+i}x,y)\quad \text{uniformly in}\ k\geq 0.
\end{equation}
Almost convergency of $\{d^2(T^nx,y)\}$ for any $y\in \mathscr H$ shows that \eqref{eqmainresu2} is well defined. By the strong convexity of $d^2(\cdot,x)$, the functions $\mathcal{F}[x]_n^k$ and $\mathcal{F}[x]$ are strongly convex and therefore have unique minimizers $\sigma_n^k(x)$ and $\sigma(x)$ respectively. If we show that assumption \eqref{lemstconfun2i2} of Lemma \ref{lemstconfun2} holds, then $\mathcal{F}[x]_n^k$ and $\mathcal{F}[x]$ satisfy all assumptions of Lemma \ref{lemstconfun2} and hence $\{T^nx\}$ is almost convergent to $\sigma(x)$. But by the first part of Lemma \ref{lemstconfun2} there exists a subsequence $\{\sigma_n^k(x)\}_n$ satisfying inequalities this lemma. We have to show
\begin{equation*}
\displaystyle\limsup_{n\rightarrow\infty} \sup_{k\geq 0}\Big(\mathcal{F}[x]\big(\sigma_n^k(x)\big)-\mathcal{F}[x]_n^k\big(\sigma_n^k(x)\big)\Big)\leq 0.
\end{equation*}
Suppose to the contrary
\begin{equation*}
\displaystyle\limsup_{n\rightarrow\infty} \sup_{k\geq 0}\Big(\mathcal{F}[x]\big(\sigma_n^k(x)\big)-\mathcal{F}[x]_n^k\big(\sigma_n^k(x)\big)\Big)> 0,
\end{equation*}
i.e.,
\begin{equation*}
\exists \lambda>0\quad :\quad \displaystyle\limsup_{n\rightarrow\infty} \sup_{k\geq 0}\Big(\mathcal{F}[x]\big(\sigma_n^k(x)\big)-\mathcal{F}[x]_n^k\big(\sigma_n^k(x)\big)\Big)> \lambda.
\end{equation*}
Therefore there exists a subsequence $\{n_i\}$ of $\{n\}$ such that:
\begin{equation*}
\sup_{k\geq 0}\Big(\mathcal{F}[x]\big(\sigma_{n_i}^k(x)\big)-\mathcal{F}[x]_{n_i}^k\big(\sigma_{n_i}^k(x)\big)\Big)> \lambda.
\end{equation*}
On the other hand for each $0<\epsilon<\lambda$, there exists $k=k(\epsilon)$ such that $$\mathcal{F}[x]\big(\sigma_{n_i}^k(x)\big)-\mathcal{F}[x]_{n_i}^k\big(\sigma_{n_i}^k(x)\big)>\lambda -\epsilon,$$ i.e.,
\begin{equation}\label{eqmainresu3}
\mathcal{F}[x]\big(\sigma_{n_i}^k(x)\big)>\lambda -\epsilon +\mathcal{F}[x]_{n_i}^k\big(\sigma_{n_i}^k(x)\big).
\end{equation}
By \eqref{eqmainresu2} and \eqref{eqmainresu3}, there exists $p_0$ such that for all $p\geq p_0$ we have:
\begin{equation*}
\displaystyle\frac{1}{p}\sum_{i=0}^{p-1} d^2\big(T^{k+i}x,\sigma_{n_i}^k(x)\big)>\lambda-\epsilon+\displaystyle\frac{1}{n_i}\sum_{i=0}^{n_i-1} d^2\big(T^{k+i}x,\sigma_{n_i}^k(x)\big).
\end{equation*}
Taking $p=n_i$ for sufficiently large $i$ such that $n_i\geq p_0$, we obtain: 
\begin{equation*}
\displaystyle\frac{1}{n_i}\sum_{i=0}^{n_i-1} d^2\big(T^{k+i}x,\sigma_{n_i}^k(x)\big)>\lambda-\epsilon+\displaystyle\frac{1}{n_i}\sum_{i=0}^{n_i-1} d^2\big(T^{k+i}x,\sigma_{n_i}^k(x)\big),
\end{equation*}
which is a contradiction. This shows that assumption \eqref{lemstconfun2i2} of Lemma \ref{lemstconfun2} holds. Now we show that $\sigma(x)$ is a fixed point of $T$. We have:
\allowdisplaybreaks\begin{eqnarray}
\mathcal{F}[x]\big(T\sigma(x)\big)&=&\displaystyle\lim_{n\rightarrow\infty}\frac{1}{n}\sum_{i=0}^{n-1} d^2\big(T^{k+i}x,T\sigma(x)\big)\nonumber\\
&= &\displaystyle\lim_{n\rightarrow\infty}\frac{1}{n}\big\{d^2\big(T^kx,T\sigma(x)\big)+\displaystyle\sum_{i=1}^{n-1} d^2\big(T^{k+i}x,T\sigma(x)\big)\big\}\nonumber\\
&\leq &\displaystyle\lim_{n\rightarrow\infty}\frac{1}{n}\sum_{i=0}^{n-2} d^2\big(T^{k+i}x,\sigma(x)\big)\ \ \ \ \ \text{by the boundedness of iterations}\nonumber\\
&\leq &\displaystyle\lim_{n\rightarrow\infty}\frac{1}{n}\sum_{i=0}^{n-1} d^2\big(T^{k+i}x,\sigma(x)\big)\nonumber\\
&=&\mathcal{F}[x]\big(\sigma(x)\big).\nonumber
\end{eqnarray}
Uniqueness of the minimizer implies that $T\sigma(x)=\sigma(x)$, which completes the proof.
\end{proof}
\section{Ergodic Theorem for Continuous Semigroup of Contractions}
In this section, we state the analogous results of the two previous sections for continuous semigroup of contractions with a nonempty fixed point set and show the almost convergence of the orbit to a fixed point of the semigroup in a locally compact Hadamard space. Since the proofs are similar to the dicrete version we will state only the results without proofs. 

Let $C$ be a closed convex subset of a Hadamard space $(\mathscr{H},d)$. A one-parameter continuous semigroup of contractions is the set $\mathcal{S}=\{S(t) : 0\leq t<\infty\}$ of self-mappings $S(t):C\to C$ that satisfying the following conditions:
\begin{enumerate}
\renewcommand{\theenumi}{\roman{enumi}}
\item for all $x\in X$, $S(0)x=x$,
\item for all $x\in X$ and all $t,s\in [0,\infty)$, $S(t+s)x=S(t)S(s)x$,
\item for each $x\in X$, $S(t)x$ is continuous for $t\in [0,\infty)$,
\item for each $t\in [0,\infty)$, $S(t)$ is a nonexpansive mapping on $C$,
\end{enumerate}
Let $F(\mathcal{S})$ denotes the common fixed point set of the family $\mathcal{S}$, i.e., $F(\mathcal{S})=\displaystyle\bigcap_{t\geq 0} F\big(S(t)\big)$. Note that $F(\mathcal{S})$ by \cite{kirkvalencia} is a closed and convex set in a Hadamard space.
\begin{definition}[Almost periodic function]
Let $(X,d)$ be a general metric space and $f:\Bbb R^+\longrightarrow X$ be a continuous function. $f$ is called almost periodic function if for each $\epsilon>0$, there exist real numbers $L=L(\epsilon)$ and $\tau=\tau(\epsilon)$ such that any interval $(q, q+L)$ where $q\geq 0$ contains at least one number $T$ for which 
\begin{equation*}
d\big(f(t+T),f(t)\big)<\epsilon \quad \forall t\geq \tau.
\end{equation*}
\end{definition}
Considering definition of almost periodic function and replacing sequence with net, all the argument in Proposition \ref{theo-almper} remain valid, therefore if $\mathscr{U}$ denotes all relatively compact nets in Hadamard spaces, we have the next proposition.
\begin{proposition}
A net $\{x_t\}_{t\in \Bbb R^+}$ is almost periodic if and only if $\{d(x_t,x)\}_{t\in \Bbb R^+}$ is almost periodic for each $x\in\mathscr{H}$.
\end{proposition}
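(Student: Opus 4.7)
The plan is to transcribe the proof of Proposition \ref{theo-almper} from sequences to nets, exactly as the paragraph preceding the proposition suggests. For the necessity direction the argument is immediate: for each $x\in\mathscr{H}$ and each $t\geq 0$ the reverse triangle inequality gives
\[
|d(x_{t+T},x)-d(x_t,x)|\leq d(x_{t+T},x_t),
\]
so any $\epsilon$-almost-period $T$ of the net $\{x_t\}$ with parameters $L,\tau$ is automatically an $\epsilon$-almost-period of the real-valued function $t\mapsto d(x_t,x)$, with the same $L$ and $\tau$.

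For the sufficiency direction, I would first use that $\{x_t\}\in\mathscr{U}$ is relatively compact: fix $\epsilon>0$ and choose a finite $\epsilon/3$-net $\{q_1,\dots,q_s\}\subset\mathscr{H}$ so that $\{x_t:t\geq 0\}\subset\bigcup_{i=1}^{s} B_{\epsilon/3}(q_i)$. By hypothesis, for each $i$ the real function $t\mapsto d(x_t,q_i)$ is almost periodic, so there exist $L_i,\tau_i$ controlling its $\epsilon/3$-almost-periods. Setting $\tau=\max_i\tau_i$, I would then produce an $L$ such that every interval $(q,q+L)$ with $q\geq 0$ contains a single number $T$ which is \emph{simultaneously} an $\epsilon/3$-almost-period of each of the $s$ functions $t\mapsto d(x_t,q_i)$. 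Given such a common $T$, for any $t\geq\tau$ pick $i=i(t)$ with $x_t\in B_{\epsilon/3}(q_i)$; then
\[
d(x_{t+T},q_i)\leq d(x_t,q_i)+\bigl|d(x_{t+T},q_i)-d(x_t,q_i)\bigr|<\tfrac{2\epsilon}{3},
\]
and the triangle inequality $d(x_{t+T},x_t)\leq d(x_{t+T},q_i)+d(q_i,x_t)<\epsilon$ yields almost periodicity of $\{x_t\}$.

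The one substantive point (the same one tacitly used in the proof of Proposition \ref{theo-almper}) is the existence of a \emph{common} $\epsilon/3$-almost-period for the finitely many scalar almost periodic functions $t\mapsto d(x_t,q_i)$; this is the classical Bohr fact that a finite collection of almost periodic functions admits a relatively dense set of common almost periods, equivalently that the maximum of finitely many almost periodic functions is almost periodic. Once this is invoked, the rest is a mechanical replacement of the discrete interval $(k,k+L)\subset\mathbb{Z}^+$ by the continuous interval $(q,q+L)\subset\Bbb R^+$; continuity of $t\mapsto x_t$ is used only to ensure that the range of the net is totally bounded and so admits a finite $\epsilon/3$-net. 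I do not foresee any additional obstacle beyond the Bohr common-almost-period step, which is the main place where the proof uses more than the triangle inequality.
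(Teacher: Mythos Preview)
Your proposal is correct and follows exactly the approach the paper intends: the paper gives no separate proof for this proposition, stating only that the argument of Proposition~\ref{theo-almper} carries over when sequences are replaced by nets. Your explicit invocation of the classical Bohr common-almost-period result for the finitely many scalar functions $t\mapsto d(x_t,q_i)$ is a welcome clarification of a step that the paper's proof of Proposition~\ref{theo-almper} handles only tacitly (there the choice $L=\max_i L_i$ is made without justifying that a \emph{single} $p$ in each interval serves all $q_i$ simultaneously).
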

 The proof of \cite[Theorem 1]{DAFERMOS197397} is stated for continuous semigroups in Banach space and also works for metric spaces. Therefore the continuous analogous of Lemma \ref{l1} holds.
\begin{lemma}
Let $C$ be a closed convex subset of a metric space $(X,d)$, $\mathcal{S}$ be a continuous semigroup of nonexpansive mappings on $C$, $x\in X$, and $w(x)$ be the set of strong subsequential of limits of the orbit $\{S(t)x\}$. Then for each $t\in [0,\infty)$, $S(t)$ is isometry on $w(x)$.
\end{lemma}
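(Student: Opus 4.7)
The plan is to follow the argument of Dafermos and Slemrod adapted to a general metric space, which is the continuous-time analogue of the proof of Lemma \ref{l1}. Everything rests on the nonexpansiveness of each $S(r)$ together with the algebraic relation $S(t+s)=S(t)S(s)$ and the continuity of $t\mapsto S(t)u$.

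First I would record the basic monotonicity: for every $u,v\in C$ the map $s\mapsto d(S(s)u,S(s)v)$ is nonincreasing on $[0,\infty)$, since for any $r\geq 0$
\begin{equation*}
d\bigl(S(s+r)u,S(s+r)v\bigr)=d\bigl(S(r)S(s)u,S(r)S(s)v\bigr)\leq d\bigl(S(s)u,S(s)v\bigr).
\end{equation*}
Next I would establish two structural properties of $w(x)$. \emph{Invariance}: $S(t)\bigl(w(x)\bigr)\subseteq w(x)$, which is immediate from $S(t+t_n)x=S(t)S(t_n)x$ together with the continuity of $S(t)$. \emph{Surjectivity}: for every $y\in w(x)$ and every $t\geq 0$ there exists $y'\in w(x)$ with $S(t)y'=y$; to see this, pick $t_n\to\infty$ with $S(t_n)x\to y$ and, using relative compactness of the orbit, extract from the bounded shifted sequence $\{S(t_n-t)x\}_{t_n\geq t}$ a subsequential limit $y'$, which then lies in $w(x)$ and satisfies $S(t)y'=\lim S(t_n)x=y$ by continuity of $S(t)$.

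With invariance and surjectivity in hand, the restriction $S(t)\colon w(x)\to w(x)$ is a nonexpansive surjection of a (relatively) compact metric space onto itself, and such a map is necessarily an isometry, a classical observation essentially due to Freudenthal--Hurewicz. For $y_{1},y_{2}\in w(x)$ the nonexpansiveness of $S(t)$ gives at once
\begin{equation*}
d\bigl(S(t)y_{1},S(t)y_{2}\bigr)\leq d(y_{1},y_{2});
\end{equation*}
for the reverse, surjectivity produces preimages $y_{1}',y_{2}'\in w(x)$ with $S(t)y_{i}'=y_{i}$, whence
\begin{equation*}
d(y_{1},y_{2})=d\bigl(S(t)y_{1}',S(t)y_{2}'\bigr)\leq d(y_{1}',y_{2}'),
\end{equation*}
and iterating yields a nondecreasing sequence of distances confined to the compact set $w(x)$, forcing equality at every stage.

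The only substantive obstacle I foresee is the surjectivity step: extracting a subsequential limit from the shifted net $\{S(t_n-t)x\}$ requires the orbit $\{S(t)x\}$ to be relatively compact. This is exactly why the lemma will be invoked in a locally compact Hadamard space with $F(\mathcal{S})\neq\emptyset$ (a common fixed point making each orbit bounded, hence relatively compact). Once that compactness is granted, the proof runs in precise parallel with the discrete Lemma \ref{l1}, and no further structure of $\mathscr{H}$ beyond nonexpansiveness and the semigroup identity is used.
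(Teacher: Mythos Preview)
Your proposal is correct and follows precisely the route the paper defers to: the paper gives no independent proof but simply observes that the Dafermos--Slemrod argument for Banach spaces carries over verbatim to metric spaces, and what you have written is exactly that argument (invariance of $w(x)$, surjectivity via back-shifting along a convergent subsequence, then the Freudenthal--Hurewicz principle that a nonexpansive surjection of a compact metric space is an isometry). Your explicit remark that the surjectivity step requires relative compactness of the orbit---a hypothesis not stated in the lemma but present in every application the paper makes of it---is accurate and matches the standing assumption in Dafermos--Slemrod.
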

\noindent Thus Theorem \ref{theo-almperreco} remain valid for semigroup of contractions with similar proof, that we can state it as follows.
\begin{theorem}
Suppose that $(X,d)$ is a complete metric space, $C$ is a closed convex subset of $X$ and $\mathcal{S}$ is a continuous semigroup of nonexpansive mappings on $C$ with $F(\mathcal{S})\neq \emptyset$. Then for $x\in X$ the function $t\rightarrow S(t)x$ is almost periodic if and only if it's range is relatively compact.
\end{theorem}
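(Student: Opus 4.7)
My plan is to mirror the proof of Theorem \ref{theo-almperreco}, replacing the discrete iterations $T^n$ by the continuous orbit $S(t)x$ and the pigeonhole on blocks of consecutive integers by a pigeonhole on equispaced time values in a compact interval. The continuous definition of almost periodicity and the preceding lemma on the isometry of $S(t)$ on $w(x)$ are the main tools.

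\textbf{Necessity.} If $t \mapsto S(t)x$ is almost periodic, one shows its range is totally bounded, whence relatively compact by completeness of $X$. Given $\epsilon>0$, choose $L$ and $\tau$ from the almost periodicity definition; continuity of the orbit on the compact interval $[0,\tau+L]$ yields a finite $\epsilon$-net there, and the $\epsilon$-almost periodicity extends this to an $\epsilon$-net for the tail $t\geq\tau$.

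\textbf{Sufficiency.} Assume $\{S(t)x\}_{t\geq 0}$ is relatively compact and let $w(x)$ denote its set of subsequential strong limits, which is nonempty and compact. The semigroup law gives forward invariance $S(t)(w(x))\subseteq w(x)$; the preceding lemma gives that each $S(t)$ acts on $w(x)$ as an isometry; and compactness of $w(x)$ upgrades this to surjectivity, so $\{S(t)|_{w(x)}\}$ extends to a one-parameter group of isometries on $w(x)$. Pick $y_0\in w(x)$ and set $y_t:=S(t)y_0\in w(x)$; the family $\{y_t\}_{t\geq 0}$ is then relatively compact and satisfies
\begin{equation*}
d(y_{t+s}, y_{t'+s})=d(y_t, y_{t'}) \qquad \text{for all } s,t,t'\geq 0.
\end{equation*}
To establish almost periodicity of $t\mapsto y_t$, fix $\epsilon>0$, take a finite $\epsilon/2$-net $\{y_{s_1},\ldots, y_{s_r}\}$ for $\{y_t\}$, and in any interval $[q,q+L]$ (with $L$ determined by the net and by continuity) apply a pigeonhole to $r+1$ equispaced time values: two of their images lie in a common $\epsilon/2$-ball, so the isometry yields an $\epsilon$-almost period of magnitude at most $L$. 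Iterating with finer nets and using the group structure together with the additivity of almost periods then supplies the required relative density of $\epsilon$-almost periods.

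Finally, transfer almost periodicity from $y_0$ to $x$. Since $y_0\in w(x)$, for each $\eta>0$ one can choose $\tau\geq 0$ with $d(S(\tau)x, y_0)<\eta$; nonexpansivity of $S(t)$ then gives $d(S(\tau+t)x, y_t)\leq d(S(\tau)x, y_0)<\eta$ uniformly in $t\geq 0$, so any $\epsilon$-almost period for $y_\cdot$ becomes an $(\epsilon+2\eta)$-almost period for $S(\cdot)x$ on $[\tau,\infty)$, completing the proof. The main obstacle is the continuous pigeonhole step: it naturally yields only a single $\epsilon$-almost period of length at most $L$, and promoting this to a relatively dense family of $\epsilon$-almost periods requires the full group extension on the compact set $w(x)$ together with an accumulation argument whose errors must be carefully controlled by choosing the initial net radius small enough in terms of $L$.
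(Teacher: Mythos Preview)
Your outline is exactly what the paper intends: it gives no separate proof for the continuous statement, saying only that the argument of Theorem~\ref{theo-almperreco} carries over, and your sketch is precisely the continuous transcription of that discrete proof (necessity via total boundedness; sufficiency by passing to the compact $\omega$-limit set, using the isometry lemma there, proving almost periodicity of the auxiliary orbit $y_t=S(t)y_0$, and transferring back to $x$ by nonexpansivity).

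You are in fact more scrupulous than the paper on the one delicate point. In the discrete proof the paper obtains a single almost period $p\in(0,r+1)$ from the pigeonhole and then writes ``similar arguments apply to the case arbitrary $k$'' for relative density; you correctly flag that the pigeonhole alone does not immediately place an almost period in every interval $(q,q+L)$. Your remedy via the group extension on $w(x)$ works, but the phrase ``iterating with finer nets and using additivity of almost periods'' is still a sketch. A cleaner way to close this step, which avoids the group machinery, is to observe that by the isometry the set of $\epsilon$-almost periods of $\{y_t\}$ is exactly the return set $R_\epsilon=\{t\ge 0: d(y_t,y_0)<\epsilon\}$; if $R_\epsilon$ had arbitrarily long gaps $[a_n,a_n+n]$, take a subsequential limit $z$ of $y_{a_n+n/2}$ in the compact set $\overline{\{y_t\}}$, pick $s$ with $d(y_s,z)<\epsilon/2$, and use $d(y_{a_n+n/2},y_s)=d(y_{a_n+n/2-s},y_0)$ to produce a return time inside the gap for large $n$, a contradiction.
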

For an orbit $\{S(t)x\}$, $\sigma_T(x)$ and $\sigma_T^s(x)$, are defined respectively as the unique minimizers of the functions 
\begin{equation*}
\mathcal{G}[x]_T(y)=\frac{1}{T}\displaystyle\int_{0}^{T} d^2(S(t)x,y)dt,
\end{equation*}
and
\begin{equation*}
\mathcal{G}[x]_T^s(y)=\frac{1}{T}\displaystyle\int_{0}^{T} d^2(S(s+t)x,y)dt.
\end{equation*}
 A net $\{S(t)x\}$ in a Hadamard space $X$ is called the Cesaro convergent or the mean convergent  (resp. almost convergent) to $x\in X$, if $\sigma_T$ (resp. $\sigma_T^s$) converges (resp. converges uniformly in $s$) to $x$.\\
Using net instead of sequence in Lemmas \ref{lemstconfun1} and \ref{lemstconfun2} implies the same argument remains true, and we have:
\begin{lemma}
Let $(\mathscr H, d)$ be a Hadamard space and $\{f_t^s\}_{s,t}$ be the net of convex functions on $\mathscr H$. If $\{x_t^s\}_{s,t}$ is a net of minimum points of $\{f_t^s\}_{s,t}$ and $x$ is the unique minimizer of the strongly convex function $f$, satisfying:
\begin{enumerate}
\renewcommand{\theenumi}{\Roman{enumi}}
\item\label{lemstconfun2i1} the net $\{f_t^s\}$ is pointwise convergent to $f$ as $t\rightarrow\infty$ uniformly in $s\geq 0$,
\item\label{lemstconfun2i2} $\displaystyle\limsup_{t\rightarrow\infty} \sup_{s\geq 0}\big(f(x_t^s)-f_t^s(x_t^s)\big)\leq 0$.
\end{enumerate}
Then $x_t^s$ converges to $x$ uniformly in $s\geq 0$ as $t\rightarrow \infty$.
\end{lemma}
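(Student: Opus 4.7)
The plan is to mirror, essentially verbatim, the argument used in Lemma \ref{lemstconfun2}, with integer indices $(n,k)$ replaced by the continuous parameters $(t,s)\in\mathbb{R}^+\times\mathbb{R}^+$. The underlying geometric ingredient, namely Lemma \ref{lemstconfun1}, is purely a statement about the unique minimizer of a strongly convex lsc function and a point outside a ball of radius $\delta$ around it, so it transfers without modification and can be invoked directly. Thus I will only need to rework the limit-extraction portion of the argument so that it handles nets rather than sequences.

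More concretely, I would argue by contradiction. Suppose $x_t^s$ does not converge to $x$ uniformly in $s\geq 0$ as $t\to\infty$. Then there exists $\delta>0$ so that for every $t_0$ one can find $t\geq t_0$ with $\sup_{s\geq 0}d(x_t^s,x)\geq \delta$. Fix any $0<\epsilon<\delta/2$; for each such $t$ pick $s=s(\epsilon,t)\geq 0$ with $d(x_t^s,x)\geq \delta-\epsilon>\delta/2$. This produces a cofinal family (a subnet) of pairs $(t,s)$ along which $d(x_t^s,x)>\delta/2$. Applying Lemma \ref{lemstconfun1} with $\delta/2$ in place of $\delta$ to these $x_t^s$ and the unique minimizer $x$ of the strongly convex $f$, I obtain
\begin{equation*}
f(x) < f(x_t^s) - \Bigl(d(x,x_t^s)-\tfrac{\delta}{2}\Bigr)d(x,x_t^s).
\end{equation*}
Combining this with the lower bound $d(x,x_t^s)>\delta/2$ gives $(\delta/2-\epsilon)(\delta/2) < f(x_t^s)-f(x)$.

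Next, since $x_t^s$ minimizes $f_t^s$ we have $f_t^s(x_t^s)\leq f_t^s(x)$, so
\begin{equation*}
\Bigl(\tfrac{\delta}{2}-\epsilon\Bigr)\tfrac{\delta}{2} < \bigl(f(x_t^s)-f_t^s(x_t^s)\bigr) + \bigl(f_t^s(x)-f(x)\bigr).
\end{equation*}
By hypothesis (II), the $\limsup$ in $t$ (uniformly in $s\geq 0$) of the first bracket is $\leq 0$; by hypothesis (I), the second bracket tends to $0$ as $t\to\infty$ uniformly in $s$. Taking the $\limsup$ in $t$ along the cofinal subnet above therefore yields $(\delta/2-\epsilon)(\delta/2)\leq 0$, contradicting the choice $\epsilon<\delta/2$. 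Hence $x_t^s\to x$ uniformly in $s\geq 0$.

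The only genuinely delicate point, and the one worth double-checking when writing out the full proof, is the subnet extraction: in the sequential Lemma \ref{lemstconfun2} one exploits that the supremum over $k\geq 0$ is attained (or almost attained) at some integer, and here the same is done with the continuous parameter $s$, but the resulting object is a net in $t$ rather than a subsequence. Since both hypotheses (I) and (II) are formulated with suprema over $s\geq 0$, the contradiction goes through without having to pick a convergent subnet in $s$; only the choice of a cofinal set of $t$'s matters. Apart from this bookkeeping, the proof is a direct transcription of the sequential case.
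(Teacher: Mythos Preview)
Your proposal is correct and is exactly the approach the paper takes: the paper does not give an independent proof of this lemma but simply states that replacing sequences by nets in Lemmas \ref{lemstconfun1} and \ref{lemstconfun2} yields the same argument, which is precisely what you have carried out. The bookkeeping you flag about extracting a cofinal family in $t$ (rather than a subsequence) is handled correctly, since hypotheses (I) and (II) are already uniform in $s$.
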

Therefore the main result holds for semigroup of contractions.
\begin{theorem}\label{semimaintheo1}
Let $C$ be a nonempty, closed and convex subset of a locally compact Hadamard space $\mathscr{H}$ and $\mathcal{S}$ be a continuous semigroup of nonexpansive mappings on $C$ with $F(\mathcal{S})\neq \emptyset$. Then the orbit $\{S(t)x\}$ is almost convergent to a common fixed point of $\mathcal{S}$.
\end{theorem}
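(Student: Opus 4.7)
The plan is to follow the proof of Theorem \ref{maintheo1} almost line by line, with the continuous parameter $T>0$ in place of the integer $n$, the shift $s\geq 0$ in place of $k$, and time averages in place of finite sums. Fix $x\in C$. Since $F(\mathcal{S})\neq\emptyset$ and each $S(t)$ is nonexpansive, the orbit $\{S(t)x:t\geq 0\}$ is bounded, hence relatively compact by local compactness of $\mathscr H$. The continuous analogue of Theorem \ref{theo-almperreco} stated above then shows that $t\mapsto S(t)x$ is almost periodic, and the continuous analogue of Proposition \ref{theo-almper} transfers this to the scalar function $t\mapsto d^2(S(t)x,y)$ for every $y\in\mathscr H$. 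By the Bohr--Lorentz theorem every scalar almost periodic function is almost convergent, so
\[
\mathcal{G}[x](y):=\lim_{T\to\infty}\frac{1}{T}\int_0^T d^2\bigl(S(s+t)x,y\bigr)\,dt
\]
exists uniformly in $s\geq 0$ for every $y\in\mathscr H$.

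Next I verify the hypotheses of the continuous Lemma for $\{\mathcal G[x]_T^s\}$ with limit $\mathcal G[x]$. Hypothesis (I) is exactly the uniform-in-$s$ convergence just established. Because $d^2(\cdot,z)$ is strongly convex with parameter $1$ in a Hadamard space, every $\mathcal G[x]_T^s$ and $\mathcal G[x]$ is lower semicontinuous and strongly convex, hence admits a unique minimizer, which I denote $\sigma_T^s(x)$ and $\sigma(x)$ respectively. Hypothesis (II) I establish by contradiction exactly as in Theorem \ref{maintheo1}: assuming $\limsup_{T\to\infty}\sup_{s\geq 0}\bigl(\mathcal G[x](\sigma_T^s(x))-\mathcal G[x]_T^s(\sigma_T^s(x))\bigr)>\lambda>0$, I pick a sequence $T_i\to\infty$ and parameters $s_i\geq 0$ witnessing a violation by more than $\lambda-\epsilon$, then use the uniform-in-$s$ definition of $\mathcal G[x]$ to replace $\mathcal G[x](\sigma_{T_i}^{s_i}(x))$ by $\mathcal G[x]_{T_i}^{s_i}(\sigma_{T_i}^{s_i}(x))$ up to error $\epsilon$, obtaining the contradictory inequality $\mathcal G[x]_{T_i}^{s_i}(\sigma_{T_i}^{s_i}(x))>\lambda-\epsilon+\mathcal G[x]_{T_i}^{s_i}(\sigma_{T_i}^{s_i}(x))$. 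The continuous Lemma then gives $\sigma_T^s(x)\to\sigma(x)$ uniformly in $s\geq 0$, i.e.\ $\{S(t)x\}$ is almost convergent to $\sigma(x)$.

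To finish, I show $\sigma(x)\in F(\mathcal S)$. Fix $s_0\geq 0$. Using nonexpansiveness of $S(s_0)$, the semigroup identity $S(s+t)=S(s_0)S(s+t-s_0)$ valid for $t\geq s_0$, and boundedness of the orbit to estimate the piece of the integral over $[0,s_0]$ by a constant times $s_0/T\to 0$,
\[
\mathcal G[x]\bigl(S(s_0)\sigma(x)\bigr)=\lim_{T\to\infty}\frac{1}{T}\int_0^T d^2\bigl(S(s+t)x,S(s_0)\sigma(x)\bigr)\,dt\leq \mathcal G[x]\bigl(\sigma(x)\bigr),
\]
and uniqueness of the minimizer of $\mathcal G[x]$ forces $S(s_0)\sigma(x)=\sigma(x)$. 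Since $s_0\geq 0$ was arbitrary, $\sigma(x)\in F(\mathcal S)$.

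The main obstacle is a bookkeeping one rather than a conceptual one: I must verify that the Bohr--Lorentz almost convergence theorem delivers genuine \emph{uniformity in the shift $s$} of the time averages (the strict analogue of Lorentz's uniformity in $k$ for sequences), and I must confirm that the supremum over $s\geq 0$ appearing in hypothesis (II) is witnessed by actual parameters $s_i\in[0,\infty)$ rather than escaping to infinity, so that the contradiction argument produces bona fide inequalities. Once these uniformities are secured, the integral version of the shift-by-$s_0$ argument upgrades the fixed-point property from a single mapping to the entire continuous semigroup without further difficulty.
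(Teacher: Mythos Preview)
Your proposal is correct and follows essentially the same approach as the paper, which likewise transplants the discrete argument of Theorem \ref{maintheo1} to the continuous setting and invokes the continuous analogues of Proposition \ref{theo-almper}, Theorem \ref{theo-almperreco}, and Lemma \ref{lemstconfun2}. The only minor difference is in the fixed-point step: rather than splitting off the interval $[0,s_0]$ and bounding it by $O(s_0/T)$, the paper exploits the uniformity in $s$ directly by writing $s=u+r$ with $u\geq 0$, so that $d^2\bigl(S(s+t)x,S(r)\sigma(x)\bigr)\leq d^2\bigl(S(u+t)x,\sigma(x)\bigr)$ for all $t\geq 0$ without any boundary term.
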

\begin{proof}
The used techniques in Theorem \ref{maintheo1} is also applicable for nets, hence for continuous semigroup of nonexpansive mappings we have $\sigma_T^s(x)\rightarrow \sigma(x)$ uniformly in $s\geq 0$ as $T\rightarrow\infty$. Now we show that $\sigma(x)$ is a common fixed point of $\mathcal{S}$. For each $r\geq 0$, without loss of generality, taking $s=u+r\geq r$, we have:
\allowdisplaybreaks\begin{eqnarray}
\mathcal{G}[x]\big(S(r)\sigma(x)\big)&=&\displaystyle\lim_{T\rightarrow\infty}\frac{1}{T}\int_{0}^{T} d^2\big(S(s+t)x,S(r)\sigma(x)\big) dt\nonumber\\
&\leq &\displaystyle\lim_{T\rightarrow\infty}\frac{1}{T}\int_{0}^{T} d^2\big(S(u+t)x,\sigma(x)\big) dt\nonumber\\
&=&\mathcal{G}[x]\big(\sigma(x)\big).\nonumber
\end{eqnarray}
Uniqueness of the minimizer implies that $S(r)\sigma(x)=\sigma(x)$ For each $r\geq 0$, which completes the proof.
\end{proof}
\section{Ergodic Convergence of Semigroups Generated by Monotone Vector Fields }
Consider a Hilbert space $H$ with the inner product $\langle\cdot ,\cdot \rangle$ and the inductive norm $\|\cdot\|$, a set valued mapping $A: H\rightarrow 2^H$ is said to be monotone if for all $x,y\in D(A)$
\begin{equation*}
\langle x^*-y^*,x-y\rangle \geq 0 \quad \forall~x^*\in Ax,~y^*\in Ay,
\end{equation*}
where $D(A)=\{x\in H : Ax\neq \emptyset\}$. The monotone operator $A$ is said to be maximal monotone if there exists no monotone operator such that it's graph properly contains $Gr(A)$, where $Gr(A)=\{(x,y)\in H\times H : x\in D(A), y\in Ax \}$. The set of zeros of the operator $A$ is $A^{-1}0=\{x\in H : Ax=0\}$. For more details about monotone operators in Hilbert spaces the reader can see \cite{bauschke2017convex}.

Let $A$ be a maximal monotone operator on $H$, the Cauchy problem
\begin{equation}\label{eveq1}
\begin{cases}
-x^{\prime}(t)\in Ax(t), &\\
 x(0)=x_0, & 
\end{cases}
\end{equation}
was studied by many authors and researchers for existence of solutions and asymptotic behavior. Taking $S(t)x_0:=x(t)$, by existence and uniqueness of solution of \eqref{eveq1}, $\mathcal{S}=\{S(t) : t\geq 0\}$ is a continuous semigroup. It is easily seen that the monotonicity of $A$ implies that $\mathcal{S}=\{S(t) : t\geq 0\}$ is a semigroup of contractions (see \cite{morosanu1988nonlinear}). It is known that $p\in A^{-1}0$ if and only if $p$ is common fixed point of the semigroup $\mathcal{S}$, therefore, $A^{-1}0$ is the same common fixed points of $S(t),\ \ t\geq0$ (see \cite{morosanu1988nonlinear}). Therefore if $A^{-1}0\neq \emptyset$, ergodic theorem proved by Baillon and Br\'{e}zis in \cite{BaillonBerzis1976}, shows that for any $y\in H$, the orbit $\{S(t)y\}$ is mean convergent to a point in the set of zeros of $A$ or common fixed point of the semigroup $\mathcal{S}$. By the main theorem of the previous section we extend the result to Hadamard manifolds.  

Hadamard manifold is a complete, simply connected $n$-dimensional Reimannian manifold of nonpositive sectional curvature. Hadamard manifolds are examples of Hadamard spaces \cite{bacak2014convex}, which are locally compact. Let $M$ be a Hadamard manifold, for $p\in M$, $T_pM$ and $TM=\displaystyle\bigcup_{p\in M} T_pM$ are denoted the tangent space at $p$ and tangent boundle of $M$ respectively. The exponential map $exp_p: T_pM\rightarrow M$ at $p$ is defined by $exp_p(v)=\gamma_v(1)$ for each $v\in T_pM$, where $\gamma_v(\cdot)$ is the geodesic with $\gamma_v(0)=p$ and $\dot{\gamma}_v(0)=v$. A mapping $A: D(A)\subset M\rightarrow 2^{TM}$ that each $x\in M$ maps to a subset of $T_xM$, is a monotone multivalued vector field iff
\begin{equation*}
\langle x^*,exp_x^{-1}y \rangle +\langle y^*,exp_y^{-1}x \rangle \leq 0,
\end{equation*}
for all $x,y\in D(A)$ and $x^*\in Ax, y^*\in Ay$. For a complete bibliography and more details about the basic concepts of Hadamard manifolds, the exponential map and also monotone vector fields, we refer the reader to \cite{sakai1996riemannian, nemeth1999, lopezmarquez2009}. If the mapping $A: D(A)\subset M\rightarrow 2^{TM}$ be a monotone multivalued vector field, by \cite[Theorems 5.1 and 5.2]{Iwamiya2003}, \eqref{eveq1} has a global solution that by Proposition 4.2 of \cite{ahmadikhatib2018} is unique. Let $S(t)x_0=x(t)$, then $\mathcal{S}=\{S(t) : t\geq 0\}$ is a nonexpansive semigroup (see lemma 4.1 of \cite{ahmadikhatib2018}). It is easy to see that the set of singularities of $A$ (i.e. the set $A^{-1}(0)$) is equal to the set of common fixed points of $\mathcal{S}$. Now we have the following result as an application of Theorem \ref{semimaintheo1}.

\begin{theorem} Suppose $A: D(A)\subset M\rightarrow 2^{TM}$ is a monotone vector field with at least a singularity point. Then every orbit $\{S(t)x\}$ of the semigroup generated by solutions of \eqref{eveq1} is almost convergent to a singularity of $A$. 
\end{theorem}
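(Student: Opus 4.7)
The plan is straightforward: the theorem reduces to verifying the hypotheses of Theorem \ref{semimaintheo1} in the concrete setting of a Cauchy problem governed by a monotone vector field, and then simply reading off the conclusion. First, I would record that every Hadamard manifold $M$ is a locally compact Hadamard space (as noted in the paragraph introducing the section), and that $M$ itself, being geodesically convex in itself, plays the role of the closed convex subset $C$ appearing in Theorem \ref{semimaintheo1}.

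Next, I would check that the family $\mathcal{S}=\{S(t):t\geq 0\}$ generated by the solutions of \eqref{eveq1} forms a continuous semigroup of nonexpansive self-maps of $M$. Global existence and uniqueness of solutions to \eqref{eveq1} under the standing monotonicity hypothesis on $A$ follow from \cite[Theorems 5.1 and 5.2]{Iwamiya2003} together with Proposition 4.2 of \cite{ahmadikhatib2018}, so that each $S(t)$ is a well-defined single-valued map $M\to M$. Uniqueness delivers the semigroup law $S(t+s)=S(t)S(s)$ and the continuity of $t\mapsto S(t)x$, while nonexpansivity of each $S(t)$ is the content of Lemma 4.1 of \cite{ahmadikhatib2018}. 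Thus the axioms (i)--(iv) of a continuous semigroup of contractions, as listed in the previous section, are all in place.

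Third, I would identify $F(\mathcal{S})$ with $A^{-1}0$, as observed in the paragraph preceding the statement: $p$ is a common fixed point of the flow if and only if it is a singularity of the vector field. The hypothesis that $A$ admits at least one singularity therefore yields $F(\mathcal{S})=A^{-1}0\neq\emptyset$.

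All hypotheses of Theorem \ref{semimaintheo1} being satisfied, the theorem applies verbatim: for every $x\in M$ the orbit $\{S(t)x\}$ is almost convergent to a point $\sigma(x)\in F(\mathcal{S})=A^{-1}0$, which is precisely a singularity of $A$. There is no real obstacle in the argument; the analytic heavy lifting (local compactness of the ambient space, nonexpansivity of the flow, identification of fixed points with singularities) has already been carried out either in the earlier sections of the paper or in the cited references, so the proof is essentially a bookkeeping exercise that confirms the abstract framework of Section 5 specializes to the monotone-flow setting on a Hadamard manifold.
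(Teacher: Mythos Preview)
Your proposal is correct and follows precisely the approach the paper intends: the paper itself gives no explicit proof of this theorem, but the preceding paragraph assembles exactly the ingredients you list (local compactness of Hadamard manifolds, well-posedness and nonexpansivity of the flow via the cited references, and the identification $F(\mathcal{S})=A^{-1}0$), declaring the result ``an application of Theorem \ref{semimaintheo1}.'' Your write-up simply makes this implicit verification explicit.
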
 



\bibliography{mybibfile}

\begin{thebibliography}{10}
\expandafter\ifx\csname url\endcsname\relax
  \def\url#1{\texttt{#1}}\fi
\expandafter\ifx\csname urlprefix\endcsname\relax\def\urlprefix{URL }\fi
\expandafter\ifx\csname href\endcsname\relax
  \def\href#1#2{#2} \def\path#1{#1}\fi

\bibitem{VNeumann1932}
J.~von Neumann, {Proof of the quasi-ergodic hypothesis}, Proc. Natl. Acad. Sci.
  USA 18~(1) (1932) 70--82.

\bibitem{baillon1975}
J.~B. Baillon, {Un th\'eor\`eme de type ergodique pour les contractions non
  lin\'eaires dans un espace de {H}ilbert}, C. R. Acad. Sci. Paris S\'er. A-B
  280~(22) (1975) Aii, A1511--A1514.

\bibitem{lorentz1948}
G.~G. Lorentz, \href{https://server1.ezproxy.ml:2103/10.1007/BF02393648}{{A
  contribution to the theory of divergent sequences}}, Acta Math. 80 (1948)
  167--190.
\newblock \href {http://dx.doi.org/10.1007/BF02393648}
  {\path{doi:10.1007/BF02393648}}.
\newline\urlprefix\url{https://server1.ezproxy.ml:2103/10.1007/BF02393648}

\bibitem{Brezis1976}
H.~Br\'{e}zis, F.~E. Browder,
  \href{https://server1.ezproxy.ml:2103/10.1090/S0002-9904-1976-14233-4}{{Nonlinear
  ergodic theorems}}, Bull. Amer. Math. Soc. 82~(6) (1976) 959--961.
\newblock \href {http://dx.doi.org/10.1090/S0002-9904-1976-14233-4}
  {\path{doi:10.1090/S0002-9904-1976-14233-4}}.
\newline\urlprefix\url{https://server1.ezproxy.ml:2103/10.1090/S0002-9904-1976-14233-4}

\bibitem{Reich1978}
S.~Reich,
  \href{https://server1.ezproxy.ml:2103/10.1016/0021-9045(78)90012-6}{{Almost
  convergence and nonlinear ergodic theorems}}, J. Approx. Theory 24~(4) (1978)
  269--272.
\newblock \href {http://dx.doi.org/10.1016/0021-9045(78)90012-6}
  {\path{doi:10.1016/0021-9045(78)90012-6}}.
\newline\urlprefix\url{https://server1.ezproxy.ml:2103/10.1016/0021-9045(78)90012-6}

\bibitem{Reich1979}
S.~Reich,
  \href{https://server1.ezproxy.ml:2103/10.1016/0022-247X(79)90024-6}{{Weak
  convergence theorems for nonexpansive mappings in {B}anach spaces}}, J. Math.
  Anal. Appl. 67~(2) (1979) 274--276.
\newblock \href {http://dx.doi.org/10.1016/0022-247X(79)90024-6}
  {\path{doi:10.1016/0022-247X(79)90024-6}}.
\newline\urlprefix\url{https://server1.ezproxy.ml:2103/10.1016/0022-247X(79)90024-6}

\bibitem{Bruck1979}
R.~E. Bruck, \href{https://server1.ezproxy.ml:2103/10.1007/BF02764907}{{A
  simple proof of the mean ergodic theorem for nonlinear contractions in
  {B}anach spaces}}, Israel J. Math. 32~(2-3) (1979) 107--116.
\newblock \href {http://dx.doi.org/10.1007/BF02764907}
  {\path{doi:10.1007/BF02764907}}.
\newline\urlprefix\url{https://server1.ezproxy.ml:2103/10.1007/BF02764907}

\bibitem{bohr1947almost}
H.~A. Bohr, H.~Cohn, {Almost Periodic Functions}, AMS Chelsea Publishing
  Series, Chelsea Publishing Company, 1947.

\bibitem{BaillonBerzis1976}
J.~B. Baillon, H.~Brezis, {Une remarque sur le comportement asymptotique des
  semigroupes non lin\'{e}aires}, Houston J. Math. 2~(1) (1976) 5--7.

\bibitem{kirkvalencia}
W.~A. Kirk, {Geodesic geometry and fixed point theory II }, in: Proceedings of
  the International Conference in Fixed Point Theory and Applications,
  Valencia, Spain, 2003, pp. 113--142.

\bibitem{bacak2014convex}
M.~Bacak, {Convex Analysis and Optimization in Hadamard Spaces}, De Gruyter
  Series in Nonlinear Analysis and Applications, De Gruyter, 2014.

\bibitem{DHOMPONGSA20082572}
S.~Dhompongsa, B.~Panyanak,
  \href{https://server1.ezproxy.ml:2103/10.1016/j.camwa.2008.05.036}{{On
  {$\Delta$}-convergence theorems in {${\rm CAT}(0)$} spaces}}, Comput. Math.
  Appl. 56~(10) (2008) 2572--2579.
\newblock \href {http://dx.doi.org/10.1016/j.camwa.2008.05.036}
  {\path{doi:10.1016/j.camwa.2008.05.036}}.
\newline\urlprefix\url{https://server1.ezproxy.ml:2103/10.1016/j.camwa.2008.05.036}

\bibitem{bridson2011metric}
M.~R. Bridson, A.~Hafliger,
  \href{https://books.google.com/books?id=3DjaqB08AwAC}{{Metric Spaces of
  Non-Positive Curvature}}, Grundlehren der mathematischen Wissenschaften,
  Springer Berlin Heidelberg, 2011.
\newline\urlprefix\url{https://books.google.com/books?id=3DjaqB08AwAC}

\bibitem{Berg2008}
I.~D. Berg, I.~G. Nikolaev,
  \href{https://server1.ezproxy.ml:2103/10.1007/s10711-008-9243-3}{{Quasilinearization
  and curvature of {A}leksandrov spaces}}, Geom. Dedicata 133 (2008) 195--218.
\newblock \href {http://dx.doi.org/10.1007/s10711-008-9243-3}
  {\path{doi:10.1007/s10711-008-9243-3}}.
\newline\urlprefix\url{https://server1.ezproxy.ml:2103/10.1007/s10711-008-9243-3}

\bibitem{brown2013topological}
R.~F. Brown, {A Topological Introduction to Nonlinear Analysis}, Birkh{\"a}user
  Boston, 2013.

\bibitem{kurtz1970}
J.~C. Kurtz,
  \href{https://server1.ezproxy.ml:2103/10.2748/tmj/1178242714}{{Almost
  convergent vector sequences}}, T\^{o}hoku Math. J. (2) 22 (1970) 493--498.
\newblock \href {http://dx.doi.org/10.2748/tmj/1178242714}
  {\path{doi:10.2748/tmj/1178242714}}.
\newline\urlprefix\url{https://server1.ezproxy.ml:2103/10.2748/tmj/1178242714}

\bibitem{burago2001course}
D.~Burago, I.~Burago, S.~Ivanov, {A Course in Metric Geometry}, Crm Proceedings
  \& Lecture Notes, American Mathematical Society, 2001.

\bibitem{Berg2007}
I.~D. Berg, I.~G. Nikolaev,
  \href{https://server1.ezproxy.ml:2103/10.1134/S1064562407030027}{{On a
  distance characterization of {A}. {D}. {A}leksandrov spaces of nonpositive
  curvature}}, Dokl. Akad. Nauk 414~(1) (2007) 10--12.
\newblock \href {http://dx.doi.org/10.1134/S1064562407030027}
  {\path{doi:10.1134/S1064562407030027}}.
\newline\urlprefix\url{https://server1.ezproxy.ml:2103/10.1134/S1064562407030027}

\bibitem{DAFERMOS197397}
C.~M. Dafermos, M.~Slemrod,
  \href{https://server1.ezproxy.ml:2103/10.1016/0022-1236(73)90069-4}{{Asymptotic
  behavior of nonlinear contraction semigroups}}, J. Functional Analysis 13
  (1973) 97--106.
\newblock \href {http://dx.doi.org/10.1016/0022-1236(73)90069-4}
  {\path{doi:10.1016/0022-1236(73)90069-4}}.
\newline\urlprefix\url{https://server1.ezproxy.ml:2103/10.1016/0022-1236(73)90069-4}

\bibitem{Baillonbruckreich}
J.~B. Baillon, R.~E. Bruck, S.~Reich, {On the asymptotic behavior of
  nonexpansive mappings and semigroups in {B}anach spaces}, Houston J. Math.
  4~(1) (1978) 1--9.

\bibitem{vesel2011}
M.~Vesel\'{y}, {Almost periodic sequences and functions with given values},
  Arch. Math. (Brno) 47~(1) (2011) 1--16.

\bibitem{Liimatainen2012}
T.~Liimatainen,
  \href{https://server1.ezproxy.ml:2103/10.1090/conm/584/11593}{Optimal
  {R}iemannian metric for a volumorphism and a mean ergodic theorem in complete
  global {A}lexandrov nonpositively curved spaces}, in: Analysis, geometry and
  quantum field theory, Vol. 584 of Contemp. Math., Amer. Math. Soc.,
  Providence, RI, 2012, pp. 163--178.
\newblock \href {http://dx.doi.org/10.1090/conm/584/11593}
  {\path{doi:10.1090/conm/584/11593}}.
\newline\urlprefix\url{https://server1.ezproxy.ml:2103/10.1090/conm/584/11593}

\bibitem{Kirk2008weak}
W.~A. Kirk, B.~Panyanak,
  \href{https://server1.ezproxy.ml:2103/10.1016/j.na.2007.04.011}{{A concept of
  convergence in geodesic spaces}}, Nonlinear Anal. 68~(12) (2008) 3689--3696.
\newblock \href {http://dx.doi.org/10.1016/j.na.2007.04.011}
  {\path{doi:10.1016/j.na.2007.04.011}}.
\newline\urlprefix\url{https://server1.ezproxy.ml:2103/10.1016/j.na.2007.04.011}

\bibitem{karcher1977}
H.~Karcher,
  \href{https://server1.ezproxy.ml:2103/10.1002/cpa.3160300502}{{Riemannian
  center of mass and mollifier smoothing}}, Comm. Pure Appl. Math. 30~(5)
  (1977) 509--541.
\newblock \href {http://dx.doi.org/10.1002/cpa.3160300502}
  {\path{doi:10.1002/cpa.3160300502}}.
\newline\urlprefix\url{https://server1.ezproxy.ml:2103/10.1002/cpa.3160300502}

\bibitem{bauschke2017convex}
H.~H. Bauschke, P.~L. Combettes,
  \href{https://books.google.com/books?id=PrM-DgAAQBAJ}{{Convex Analysis and
  Monotone Operator Theory in Hilbert Spaces}}, CMS Books in Mathematics,
  Springer International Publishing, 2017.
\newline\urlprefix\url{https://books.google.com/books?id=PrM-DgAAQBAJ}

\bibitem{morosanu1988nonlinear}
G.~Morosanu, \href{https://books.google.com/books?id=XY-IU7Rb5bcC}{{Nonlinear
  Evolution Equations and Applications}}, Mathematics and its Applications,
  Springer Netherlands, 1988.
\newline\urlprefix\url{https://books.google.com/books?id=XY-IU7Rb5bcC}

\bibitem{sakai1996riemannian}
T.~Sakai, \href{https://books.google.com/books?id=aqgaZVr94xMC}{{Riemannian
  Geometry}}, Fields Institute Communications, American Mathematical Soc.,
  1996.
\newline\urlprefix\url{https://books.google.com/books?id=aqgaZVr94xMC}

\bibitem{nemeth1999}
S.~Z. N\'{e}meth, {Monotone vector fields}, Publ. Math. Debrecen 54~(3-4)
  (1999) 437--449.

\bibitem{lopezmarquez2009}
C.~Li, G.~L\'{o}pez, V.~Mart\'{i}n-M\'{a}rquez,
  \href{https://server1.ezproxy.ml:2103/10.1112/jlms/jdn087}{{Monotone vector
  fields and the proximal point algorithm on {H}adamard manifolds}}, J. Lond.
  Math. Soc. (2) 79~(3) (2009) 663--683.
\newblock \href {http://dx.doi.org/10.1112/jlms/jdn087}
  {\path{doi:10.1112/jlms/jdn087}}.
\newline\urlprefix\url{https://server1.ezproxy.ml:2103/10.1112/jlms/jdn087}

\bibitem{Iwamiya2003}
T.~Iwamiya, H.~Okochi,
  \href{https://server1.ezproxy.ml:2103/10.1016/S0362-546X(03)00040-3}{{Monotonicity,
  resolvents and {Y}osida approximations of operators on {H}ilbert manifolds}},
  Nonlinear Anal. 54~(2) (2003) 205--214.
\newblock \href {http://dx.doi.org/10.1016/S0362-546X(03)00040-3}
  {\path{doi:10.1016/S0362-546X(03)00040-3}}.
\newline\urlprefix\url{https://server1.ezproxy.ml:2103/10.1016/S0362-546X(03)00040-3}

\bibitem{ahmadikhatib2018}
P.~Ahmadi, H.~Khatibzadeh,
  \href{https://server1.ezproxy.ml:2103/10.5486/pmd.2018.7977}{{Semi-group
  generated by evolution equations associated with monotone vector fields}},
  Publ. Math. Debrecen 93~(3-4) (2018) 285--301.
\newblock \href {http://dx.doi.org/10.5486/pmd.2018.7977}
  {\path{doi:10.5486/pmd.2018.7977}}.
\newline\urlprefix\url{https://server1.ezproxy.ml:2103/10.5486/pmd.2018.7977}

\end{thebibliography}

\end{document}